\def\draftdate{\today}
\numberwithin{equation}{section}
\newtheorem{theorem}[equation]{Theorem}
\newtheorem*{theorem*}{Theorem}
\newtheorem{lemma}[equation]{Lemma}
\newtheorem{proposition}[equation]{Proposition}
\theoremstyle{definition}
\newtheorem{definition}[equation]{Definition}
\theoremstyle{remark}
\newtheorem{notation}[equation]{Notation}
\newtheorem{convention}[equation]{Convention}
\DeclareMathOperator{\Mot}{Mot}
\newcommand{\too}{\longrightarrow}
\newcommand{\dg}{\mathsf{dg}}
\newcommand{\dgHo}{\mathsf{H}^0}
\newcommand{\cA}{{\mathcal A}}
\newcommand{\cB}{{\mathcal B}}
\newcommand{\cC}{{\mathcal C}}
\newcommand{\cD}{{\mathcal D}}
\newcommand{\cJ}{{\mathcal J}}
\newcommand{\cM}{{\mathcal M}}
\newcommand{\cR}{{\mathcal R}}
\newcommand{\cS}{{\mathcal S}}
\newcommand{\cT}{{\mathcal T}}
\newcommand{\cU}{{\mathcal U}}
\newcommand{\cW}{{\mathcal W}}
\newcommand{\Mat}{{\mathsf M}_{\infty}} 
\newcommand{\bbD}{\mathbb{D}}
\newcommand{\bbL}{\mathbb{L}}
\newcommand{\bbK}{I\mspace{-6.mu}K}
\newcommand{\bbR}{\mathbb{R}}
\newcommand{\bbN}{\mathbb{N}}
\newcommand{\bbZ}{\mathbb{Z}}
\newcommand{\op}{\mathsf{op}} 
\newcommand{\ie}{\textsl{i.e.}\ }
\newcommand{\Hmo}{\mathsf{Hmo}}
\newcommand{\perf}{\mathsf{perf}} 
\newcommand{\Hom}{\mathsf{Hom}} 
\newcommand{\rep}{\mathsf{rep}} 
\newcommand{\dgcat}{\mathsf{dgcat}}
\newcommand{\HO}{\mathsf{HO}} 
\newcommand{\Madd}{\Mot^{\mathsf{add}}_{\dg}}
\newcommand{\Uadd}{\cU^{\mathsf{add}}_{\dg}}
\newcommand{\Mloc}{\Mot^{\mathsf{loc}}_{\dg}}
\newcommand{\Uloc}{\cU^{\mathsf{loc}}_{\dg}}
\newcommand{\uHom}{\underline{\mathsf{Hom}}}
\newcommand{\HomC}{\uHom_{!}}
\newcommand{\HomL}{\uHom_{\mathsf{loc}}}
\newcommand{\internalcomment}[1]{}
\begin{document}

\title[Universal suspension via non-commutative motives]{Universal suspension \\via non-commutative motives}
\author{Gon{\c c}alo~Tabuada}
\address{Departamento de Matem{\'a}tica e CMA, FCT-UNL, Quinta da Torre, 2829-516 Caparica,~Portugal}
\email{tabuada@fct.unl.pt}
\thanks{The author was partially supported by the Estimulo {\`a} Investiga{\c c}{\~a}o Award 2008 - Calouste Gulbenkian Foundation.}

\date{\draftdate}
\subjclass[2000]{18D20, 19D35, 19D55}

\keywords{Non-commutative motives, Infinite matrix algebras, Algebraic $K$-theory, (Topological) Hochschild and cyclic homology}

\begin{abstract}
In this article we further the study of non-commutative motives, initiated in \cite{CT,CT1,Duke}. Our main result is the construction of a simple model, given in terms of infinite matrices, for the suspension in the triangulated category of non-commutative motives. As a consequence, this simple model holds in all the classical invariants such as Hochschild homology, cyclic homology and its variants (periodic, negative, $\ldots$), algebraic $K$-theory, topological Hochschild homology, topological cyclic homology, \ldots
\end{abstract}
\maketitle

\section{Introduction}
\subsection*{Non-commutative motives}
A {\em differential graded (=dg) category}, over a commutative base ring $k$, is a category enriched over 
complexes of $k$-modules (morphisms sets are such complexes)
in such a way that composition fulfills the Leibniz rule\,:
$d(f\circ~g)=(df)\circ g+(-1)^{\textrm{deg}(f)}f\circ(dg)$.
Dg categories enhance and solve many of the technical problems inherent to triangulated categories;
see Keller's ICM adress~\cite{ICM}. In {\em non-commutative algebraic geometry} in the sense of
Bondal, Drinfeld, Kaledin, Kapranov, Kontsevich, To{\"e}n,
Van den Bergh, $\ldots$ \cite{BKap,Bvan,Drinfeld,Kaledin,IAS,Kontsevich-Langlands,Toen},
they are considered as dg-enhancements of derived categories of (quasi-)coherent sheaves on a
hypothetic non-commutative space.

All the classical (functorial) invariants, such as Hochschild homology $HH$, cyclic homology $HC$, (non-connective) algebraic $K$-theory $\bbK$, topological Hochschild homology $THH$, and
topological cyclic homology $TC$, extend naturally from $k$-algebras to dg categories.
In order to study {\em all} these classical invariants simultaneously the author
introduced in \cite{Duke} the notion of {\em localizing invariant}. This notion, that we now recall,
makes use of the language of Grothendieck derivators~\cite{Grothendieck}, a formalism which
allows us to state and prove precise universal properties.
Let $\mathit{L}: \HO(\dgcat) \to \bbD$ be a morphism of derivators, from the
derivator associated to the derived Morita model structure on dg categories (see \S\ref{sub:Morita}), to
a triangulated derivator.
We say that $\mathit{L}$ is a {\em localizing invariant} if it preserves filtered homotopy
colimits as well as the terminal object, and sends exact sequences of dg categories~(see \S\ref{sub:exactseq})
\begin{eqnarray*}
\cA \too \cB \too \cC & \mapsto & \mathit{L}(\cA) \too \mathit{L}(\cB) \too \mathit{L}(\cC) \too \mathit{L}(\cA)[1]
\end{eqnarray*}
to distinguished triangles in the base category $\bbD(e)$ of $\bbD$.
Thanks to the work of Keller~\cite{Exact,Exact1}, Thomason-Trobaugh~\cite{Thomason}, Schlichting~\cite{Marco}, and Blumberg-Mandell~\cite{BM} (see also \cite{AGT}),
all the mentioned invariants satisfy localization\footnote{In the case of algebraic
$K$-theory we consider its non-connective version.}, and so give rise to localizing invariants. 
In \cite{Duke}, the author constructed the universal localizing invariant
$$ \Uloc: \HO(\dgcat) \too \Mloc\,,$$
\ie given any triangulated derivator $\bbD$, we have an induced equivalence of categories
\begin{equation}\label{eq:cat}
(\Uloc)^{\ast}: \HomC(\Mloc, \bbD) \stackrel{\sim}{\too} \HomL(\HO(\dgcat), \bbD)\,,
\end{equation}
where the left-hand side denotes the category of homotopy colimit preserving morphisms of derivators,
and the right-hand side denotes the category of localizing invariants.
Because of this universality property, which is a reminiscence of motives, $\Mloc$ is called the {\em localizing motivator}, and its base category $\Mloc(e)$ the {\em category of non-commutative motives}. We invite the reader to consult \cite{CT,CT1,Duke} for several applications of this theory of non-commutative motives.
\subsection*{Universal suspension}
The purpose of this article is to construct a simple model for the suspension in the triangulated category of non-commutative motives. 

Consider the $k$-algebra $\Gamma$ of $\bbN\times \bbN$-matrices $A$ which satisfy the following two conditions\,: (1) the set $\{A_{i,j}\, |\, i, j \in \bbN \}$ is finite; (2) there exists a natural number $n_A$ such that each row and each column has at most $n_A$ non-zero entries; see Definition~\ref{def:cone}. Let $\Sigma$ be the quotient of $\Gamma$ by the two-sided ideal consisting of those matrices with finitely many non-zero entries; see Definition~\ref{def:finMat}. Alternatively, take the (left) localization of $\Gamma$ with respect to the matrices $\overline{I_n}, n \geq 0$, with entries $(\overline{I_n})_{i,j}={\bf 1}$ for $i=j >n$ and $0$ otherwise; see Proposition~\ref{prop:alt-descr}. The algebra $\Sigma$ goes back to the work of Karoubi and Villamayor~\cite{Karoubi} on negative $K$-theory. Recently, it was used by Corti{\~n}as and Thom~\cite{Cortinas} in the construction of a bivariant algebraic $K$-theory.
Given a dg category $\cA$, we denote by $\Sigma(\cA)$ the tensor product of $\cA$ with $\Sigma$; see \S\ref{sub:bimodules}. The main result of this article is the following.
\begin{theorem}\label{thm:main}
For every dg category $\cA$ we have a canonical isomorphism
$$ \Uloc(\Sigma(\cA)) \stackrel{\sim}{\too} \Uloc(\cA)[1]\,.$$
\end{theorem}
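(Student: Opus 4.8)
The plan is to transport the classical Karoubi--Villamayor suspension argument into the framework of the universal localizing invariant. Recall that $\Sigma$ is the quotient of the cone $\Gamma$ by the two-sided ideal $\Mat(k) \subset \Gamma$ of matrices with finitely many non-zero entries (Definition~\ref{def:finMat}), and write $\Gamma(\cA) := \cA \otimes \Gamma$ for the ``cone'' of $\cA$. The first step is to produce an exact sequence of dg categories
\begin{equation*}
\cA \otimes \Mat(k) \too \Gamma(\cA) \too \Sigma(\cA)\,.
\end{equation*}
For this one checks, following Corti{\~n}as--Thom~\cite{Cortinas} and using that $\Mat(k)$ is an idempotent ideal with local units, that the short exact sequence of $k$-algebras $0 \to \Mat(k) \to \Gamma \to \Sigma \to 0$ gives rise to an exact sequence of dg categories in the sense of \S\ref{sub:exactseq}, and that it remains exact after applying $\cA \otimes_k -$. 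The latter point uses that $\Gamma$, $\Sigma$ and $\Mat(k)$ are flat over $k$; for $\Gamma$ this is where condition~(1) of Definition~\ref{def:cone} re-enters, since finiteness of the set of entries yields $\Gamma \cong \Gamma_{\bbZ} \otimes_{\bbZ} k$ with $\Gamma_{\bbZ}$ torsion-free. Applying the localizing invariant $\Uloc$ then produces a distinguished triangle
\begin{equation*}
\Uloc(\cA \otimes \Mat(k)) \too \Uloc(\Gamma(\cA)) \too \Uloc(\Sigma(\cA)) \too \Uloc(\cA \otimes \Mat(k))[1]\,.
\end{equation*}

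The second step identifies the first term with $\Uloc(\cA)$. The ideal $\Mat(k)$ is Morita equivalent to $k$; concretely, $\cA \otimes \Mat(k) = \colim_n M_n(\cA)$ is a filtered homotopy colimit of the matrix dg categories $M_n(\cA) := \cA \otimes M_n(k)$, each of which is Morita equivalent to $\cA$, along corner inclusions $M_n(\cA) \hookrightarrow M_{n+1}(\cA)$ inducing isomorphisms on $\Uloc$. Since $\Uloc$ is Morita invariant and preserves filtered homotopy colimits, the canonical map $\cA = M_1(\cA) \to \cA \otimes \Mat(k)$ induces an isomorphism $\Uloc(\cA) \stackrel{\sim}{\too} \Uloc(\cA \otimes \Mat(k))$.

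The heart of the argument is the vanishing $\Uloc(\Gamma(\cA)) = 0$, \ie the flasqueness of the cone. Fixing a bijection $\bbN \cong \bbN \times \bbN$, one has a \emph{unital} $k$-algebra endomorphism $\varphi^{\infty} \colon \Gamma \to \Gamma$ given by ``countable amplification'', namely the block-diagonal sum of countably many copies of a matrix, reindexed along the chosen bijection. This is well defined precisely because conditions (1) and~(2) of Definition~\ref{def:cone}---a finite set of entries, and a uniform bound on the number of non-zero entries per row and per column---are manifestly preserved by this operation, which is exactly why one works with $\Gamma$ rather than with the algebra of all $\bbN \times \bbN$-matrices. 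The homomorphism $\varphi^{\infty}$ exhibits $\Gamma$ as flasque (the key input of Karoubi--Villamayor~\cite{Karoubi}): it provides an exact endofunctor $s$ of the category of perfect $\Gamma$-modules together with a natural isomorphism $\mathrm{id} \oplus s \simeq s$, an Eilenberg swindle. Tensoring with $\cA$ carries all of this to $\Gamma(\cA)$, producing $s_{\cA}$ with $\mathrm{id} \oplus s_{\cA} \simeq s_{\cA}$. By the additivity of $\Uloc$ (see~\cite{Duke}), applying $\Uloc$ gives $\mathrm{id}_{\Uloc(\Gamma(\cA))} + \Uloc(s_{\cA}) = \Uloc(s_{\cA})$ as endomorphisms of $\Uloc(\Gamma(\cA))$, whence $\mathrm{id}_{\Uloc(\Gamma(\cA))} = 0$ and $\Uloc(\Gamma(\cA)) = 0$.

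Combining the three steps, the distinguished triangle degenerates to $\Uloc(\cA \otimes \Mat(k)) \too 0 \too \Uloc(\Sigma(\cA)) \stackrel{\partial}{\too} \Uloc(\cA \otimes \Mat(k))[1]$, so the connecting morphism $\partial$ is an isomorphism; composing it with the shift of the isomorphism of the second step yields the asserted isomorphism $\Uloc(\Sigma(\cA)) \stackrel{\sim}{\too} \Uloc(\cA)[1]$. It is canonical---indeed functorial in $\cA$---because the short exact sequence of algebras is fixed once and for all, and the operations $\cA \otimes -$, $\Uloc$, the formation of the distinguished triangle, and the Morita identification are all functorial. The step I expect to be the main obstacle is the vanishing $\Uloc(\Gamma(\cA)) = 0$: although the flasqueness of $\Gamma$ is classical at the level of $K$-theory, one must here present the swindle as an honest isomorphism of dg functors so that it is detected by the \emph{universal} localizing invariant, and one must verify that this structure is genuinely $\cA$-linear; a secondary point requiring care is the exactness of the sequence of dg categories above and its stability under $\cA \otimes_k -$.
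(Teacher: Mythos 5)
Your overall strategy matches the paper's: build the exact sequence $\cA \to \Gamma(\cA) \to \Sigma(\cA)$, kill the middle term by a swindle, and read the suspension off the resulting triangle. But the way you establish the crucial vanishing $\Uloc(\Gamma(\cA)) = 0$ is genuinely different from the paper, and it is here that your argument is not yet airtight. You apply ``additivity of $\Uloc$'' directly to the swindle $\mathrm{id} \oplus s_\cA \simeq s_\cA$, concluding $\mathrm{id}_{\Uloc(\Gamma(\cA))} = 0$. However, additivity for the universal additive/localizing invariant is formulated as a statement about \emph{split exact sequences} of dg categories being sent to direct sums; passing from that to the relation $\Uloc(\mathrm{id}\oplus s) = \Uloc(\mathrm{id}) + \Uloc(s)$ for a pair of dg functors is an extra implication that needs an argument. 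The paper sidesteps this entirely: it first reduces, via the symmetric monoidal structure on $\Uloc$ (\cite[Theorem~7.5]{CT1}), to showing $\Uloc(\underline{\Gamma})=0$; it then invokes the co-representability theorem \cite[Theorem~15.10]{Duke}, which identifies $\bbR\uHom(\Uadd(\cB),\Uadd(\underline{\Gamma}))$ with the $K$-theory spectrum $K\rep(\cB,\perf_\dg(\Gamma))$ for $\cB$ a dg cell; and since those generate $\Madd(e)$, the vanishing is recast as a statement about honest $K$-theory spectra, where Waldhausen's additivity theorem gives the swindle in exactly the form one needs (Lemmas~\ref{lem:induced} and~\ref{lem:K-trivial}). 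So the detour through $K$-theory is not cosmetic; it is precisely where the swindle is made to bite against the universal invariant.

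Two smaller points. First, your identification $\Uloc(\cA\otimes\Mat)\simeq\Uloc(\cA)$ via the colimit $\colim_n M_n(\cA)$ along corner inclusions is not quite well-posed as written: the corner embeddings $M_n \hookrightarrow M_{n+1}$ are non-unital algebra maps, hence not dg functors. The paper instead replaces the non-unital algebra $\Mat$ by Keller's dg category of idempotents $\cM_\infty$, proves directly that $\underline{k}\to\cM_\infty$ is a derived Morita equivalence (Proposition~4.2 of the paper), and performs the substitution \emph{before} tensoring with $\cA$ and before applying $\Uloc$, so that Drinfeld's theorem on dg quotients gives the exact sequence $0\to\cA\to\Gamma(\cA)\to\Sigma(\cA)\to 0$ cleanly. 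Second, you assert that the amplification endomorphism $\varphi^\infty$ ``exhibits $\Gamma$ as flasque'' with a pointer to Karoubi--Villamayor; this is fine as a citation, but note that turning $\varphi^\infty$ into the required natural isomorphism $\mathrm{id}\oplus\tau\simeq\tau$ on $\perf_\dg(\Gamma)$ is exactly the content of the bimodule $W$ and the bimodule isomorphism $\Gamma\oplus W\simeq W$ of Lemma~\ref{lem:flasque} and Proposition~\ref{prop:main2}, so some nontrivial bookkeeping is being outsourced there.
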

The proof of Theorem~\ref{thm:main} is based on several properties of the category of non-commutative motives (see Section~\ref{sub:proof}), on an exact sequence relating $\cA$ and $\Sigma(\cA)$ (see Section~\ref{sub:exact}), and on the {\em flasqueness} of $\Gamma$ (see Section~\ref{sub:flasque}). Let us now describe some applications of Theorem~\ref{thm:main}.
\subsection*{Applications}
A {\em realization} of the category of non-commutative motives is a triangulated functor $R: \Mloc(e) \to \cT$. An important aspect of a realization is the fact that {\em every} result which holds on $\Mloc(e)$ also holds on $\cT$. In particular, given a dg category $\cA$, Theorem~\ref{thm:main} furnish us a canonical isomorphism
\begin{equation*}
(R\circ \Uloc)(\Sigma(\cA)) \stackrel{\sim}{\too} (R\circ \Uloc)(\cA)[1]\,.
\end{equation*}
Thanks to the above equivalence \eqref{eq:cat} every localizing invariant gives rise to a realization. Therefore, we obtain the canonical isomorphisms\,: 
\begin{eqnarray}
HH(\Sigma(\cA)) \simeq HH(\cA)[1] && HH_{\ast+1}(\Sigma(\cA))\simeq HH_{\ast}(\cA) \label{eq:isom-1}\\
HC(\Sigma(\cA)) \simeq HC(\cA)[1] && HC_{\ast+1}(\Sigma(\cA))\simeq HC_{\ast}(\cA) \label{eq:isom-2}\\
\bbK(\Sigma(\cA)) \simeq \bbK(\cA)[1] && \bbK_{\ast+1}(\Sigma(\cA))\simeq \bbK_{\ast}(\cA) \label{eq:isom-3}\\
THH(\Sigma(\cA)) \simeq THH(\cA)[1] && THH_{\ast+1}(\Sigma(\cA))\simeq THH_{\ast}(\cA) \label{eq:isom-4}\\
TC(\Sigma(\cA)) \simeq TC(\cA)[1] && TC_{\ast+1}(\Sigma(\cA))\simeq TC_{\ast}(\cA) \label{eq:isom-5}\,. 
\end{eqnarray}
Negative cyclic homology $HC^-$ and periodic cyclic homology $HP$ are not examples of localizing invariants since they do {\em not} preserve filtered (homotopy) colimits. Nevertheless, as explained in \cite[Examples~8.10 and 8.11]{CT1}, they factor through $\Mloc$ thus giving rise to realizations. We obtain then the canonical isomorphisms\,:
\begin{eqnarray}
HC^-(\Sigma(\cA)) \simeq HC^-(\cA)[1] && HC^-_{\ast+1}(\Sigma(\cA))\simeq HC^-_{\ast}(\cA) \label{eq:isom-6} \\
HP(\Sigma(\cA))\simeq HP(\cA)[1] && HP_{\ast+1}(\Sigma(\cA))\simeq HP_{\ast}(\cA) \label{eq:isom-7}\,.
\end{eqnarray}
Note that since $HP$ is $2$-periodic, the homologies of $\Sigma(\cA)$ and $\cA$ can be obtained from each other by simply switching the degrees. To the best of the author's knowledge the isomorphisms \eqref{eq:isom-1}-\eqref{eq:isom-7} are new. They show us that $\Sigma(\cA)$ is a simple model for the suspension in all these classical invariants.\footnote{Recall that all these invariants take values in arbitrary degrees.}

We would like to mention that Kassel constructed an isomorphism related to \eqref{eq:isom-2} but for ordinary algebras over a field and with cyclic homology replaced by bivariant cyclic cohomology; see~\cite[Theorem~3.1]{Kassel}. Instead of $\Gamma$, he considered the larger algebra of infinite matrices which have finitely many non-zero entries in each line and column.

Now, let $X$ a quasi-compact and quasi-separated scheme. It is well-known that the category of perfect complexes in the (unbounded) derived category of quasi-coherent sheaves on $X$ admits a dg-enhancement $\perf_\dg(X)$; see for instance~\cite{BKap,Orlov} or~\cite[Example~4.5]{CT1}. Thanks to \cite[Theorem~1.3]{BM}, \cite[\S5.2]{Exact1} and \cite[\S8 Theorem~5]{Marco} the algebraic $K$-theory and the (topological) cyclic homology\footnote{In fact we can consider any variant of (topological) cyclic homology.} of the scheme $X$ can be obtained from the dg category $\perf_\dg(X)$ by applying the corresponding invariant. Therefore, when $\cA=\perf_\dg(X)$, the above isomorphisms \eqref{eq:isom-1}-\eqref{eq:isom-7} suggest us that the dg category $\Sigma(\perf_\dg(X))$ should be considered as the ``non-commutative suspension'' or ``non-commutative delooping'' of the scheme $X$. This will be the subject of future research.

\subsection*{Acknowledgments\,:} Theorem~\ref{thm:main} answers affirmatively a question raised by Maxim Kontsevich in my Ph.D. thesis defense~\cite{Thesis}. I deeply thank him for his insight. I am also grateful to Bernhard Keller, Marco Schlichting and Bertrand To{\"e}n for useful conversations and/or references.

\begin{convention}
Throughout the article $k$ will denote a commutative base ring with unit ${\bf 1}$.
Given a dg algebra $H$ we will denote by $\underline{H}$ the dg category with a single object $\ast$ and with $H$ as the dg algebra of endomorphisms. 
\end{convention}
\section{Background on dg categories}\label{sec:background}
In this section we collect some notions and results on dg categories which will be used throughout the article.

Let $\cC(k)$ be the category of (unbounded) complexes of $k$-modules; we use cohomological notation. A {\em differential graded (=dg) category} is a category enriched over $\cC(k)$ and a {\em dg functor} is a functor enriched over $\cC(k)$; consult Keller's ICM adress~\cite{ICM} for a survey on dg categories. The category of dg categories will be denoted by $\dgcat$.
\begin{notation}\label{not:basic}
Let $\cA$ be a dg category. The category $\mathsf{Z}^0(\cA)$ has the same objects as $\cA$ and morphisms given by $\mathsf{Z}^0(\cA)(x,y):=\textrm{Z}^0(\cA(x,y))$. The category $\dgHo(\cA)$ has the same objects as $\cA$ and morphisms given by $\dgHo(\cA)(x,y):=\textrm{H}^0(\cA(x,y))$. The {\em opposite} dg category $\mathcal{A}^{\op}$ of $\cA$ has the same objects as $\mathcal{A}$ and complexes of morphisms given by
$\mathcal{A}^{\op}(x,y):=\mathcal{A}(y,x)$.
\end{notation}
\subsection{(Bi)modules}\label{sub:bimodules}
Let $\cA$ be a dg category. A {\em right $\cA$-module} $M$ is a dg functor
$M:\cA^{\op} \to \cC_{\dg}(k)$ with values in the dg category $\cC_{\dg}(k)$ of complexes of
$k$-modules. We will denote by $\cC(\cA)$ the category of right $\cA$-modules; see \cite[\S 2.3]{ICM}. As explained in \cite[\S 3.1]{ICM} the differential graded structure of $\cC_{\dg}(k)$ makes $\cC(\cA)$
naturally into a dg category $\cC_\dg(\cA)$. Recall from~\cite[Theorem~3.2]{ICM} that $\cC(\cA)$ carries a standard projective $\cC(k)$-model structure. The {\em derived category $\cD(\cA)$ of $\cA$} is the localization of
$\cC(\cA)$ with respect to the class of objectwise quasi-isomorphisms.
\begin{notation}\label{not:perf}
We denote by $\perf(\cA)$, resp. by $\perf_\dg(\cA)$, the full subcategory of $\cC(\cA)$, resp. full dg subcategory of $\cC_\dg(\cA)$, whose objects are the cofibrant right $\cA$-modules that are compact~\cite[Definition~4.2.7]{Neeman} in the triangulated category $\cD(\cA)$.
\end{notation}
Given dg categories $\cA$ and $\cB$ their {\em tensor product}
$\cA \otimes \cB$ is defined as follows: the set of objects is the cartesian product and given objects $(x,z)$ and $(y,w)$ in $\cA \otimes \cB$, we set $(\cA \otimes \cB)((x,z),(y,w)):= \cA(x,y) \otimes \cB(z,w)$. A {\em $\cA\text{-}\cB$-bimodule} $X$ is a dg functor $X:\cA^\op\otimes \cB \to \cC_\dg(k)$, \ie a right $\cA^\op\otimes \cB$-module.
\subsection{Derived Morita equivalences}\label{sub:Morita}
A dg functor $F: \cA \to \cB$ is a called a {\em derived Morita equivalence}
if its derived extension of scalars functor $\bbL F_!: \cD(\cA) \stackrel{\sim}{\to}~\cD(\cB)$ (see \cite[\S3]{Toen}) is an equivalence of triangulated categories. Thanks to \cite[Theorem~5.3]{IMRN} (and \cite{IMRNC}) the category $\dgcat$ carries a (cofibrantly generated) Quillen model structure whose weak equivalences are the derived Morita equivalences. We denote by $\Hmo$ the homotopy category hence obtained.

The tensor product of dg categories can be derived into a bifunctor $-\otimes^\bbL-$ on $\Hmo$. Moreover, thanks to \cite[Theorem.~6.1]{Toen} the bifunctor $-\otimes^\bbL-$ admits an internal Hom-functor $\rep(-,-)$.\footnote{Denoted by $\bbR\uHom(-,-)$ in {\em loc. cit.}} Given dg categories $\cA$ and $\cB$, $\rep(\cA,\cB)$ is the full dg subcategory of $\cC_\dg(\cA^\op\otimes^{\bbL}\cB)$ spanned by the cofibrant $\cA\text{-}\cB$-bimodules $X$ such that, for every object $x$ in $\cA$, the right $\cB$-module $X(x,-)$ is compact in $\cD(\cB)$. The set of morphisms in $\Hmo$ from $\cA$ to $\cB$ is given by the set of isomorphism classes of the triangulated category $\dgHo(\rep(\cA,\cB))$.
\subsection{Exact sequences}\label{sub:exactseq}
A sequence of triangulated categories
$$0 \too \cR \stackrel{I}{\too} \cS \stackrel{P}{\too} \cT \too 0$$
is called {\em exact} if the composition is zero, the functor $I$ is
fully-faithful and the induced functor from the Verdier quotient $\cS/\cR$ to $\cT$
is {\em cofinal}, \ie it is fully-faithful and every object in $\cT$ is a direct summand
of an object of $\cS/\cR$; see \cite[\S2]{Neeman}. A sequence in $\Hmo$
$$0 \too \cA \stackrel{X}{\too} \cB \stackrel{Y}{\too} \cC \too 0$$
is called {\em exact} if the induced sequence of triangulated categories
$$0 \too \cD(\cA) \stackrel{-\otimes_\cA^\bbL X}{\too} \cD(\cB) \stackrel{-\otimes_\cB^\bbL Y}{\too} \cD(\cC) \too 0$$
is exact; see \cite[\S4.6]{ICM}.
\section{Infinite matrix algebras}\label{sec:matrices}
In this section we introduce the matrix algebras used in the construction of the universal suspension.
\begin{definition}\label{def:finMat}
Given $n \in \bbN$, we denote by $\mathsf{M}_n$ the $k$-algebra of $n\times n$-matrices with coefficients in $k$. Let
$$\Mat:= \bigcup_{n=1}^\infty\mathsf{M}_n$$
be the $k$-algebra of {\em finite matrices}, where $\mathsf{M}_n \subset \mathsf{M}_{n+1}$ via the map
$$ A \mapsto \left[\begin{matrix} A  & 0  \\ 0 & 0 \end{matrix}\right]\,.$$
Note that $\Mat$ does {\it not} have a unit object. Moreover, transposition of matrices gives rise to an isomorphism of $k$-algebras
\begin{equation}\label{eq:transp1}
(-)^T: (\Mat)^\op \stackrel{\sim}{\too} \Mat\,.
\end{equation}
\end{definition} 
\begin{notation}\label{not:elem-matrices}
Given $k, l \in \bbN$, we denote by $E_{kl} \in \Mat$ the matrix 
\begin{equation*}
(E_{kl})_{i,j} := \left\{ \begin{array}{lcl}
{\bf 1} &  \text{if} & i=k \,\, \text{and}\,\,j=l \\
0 &  & \text{otherwise} \\
\end{array} \right.
\end{equation*}
Note that given $k, l, m, n \in \bbN$, the product $E_{kl} \cdot E_{nm}$ equals $E_{km}$ if $l=n$ and is zero otherwise. Given a non-negative integer $n \geq 0$, we denote by $I_n \in \Mat$ the matrix 
\begin{equation*}
(I_{n})_{i,j} := \left\{ \begin{array}{lcl}
{\bf 1} &  \text{if} & i=j\leq n \\
0 &  & \text{otherwise} \\
\end{array} \right.
\end{equation*}
In particular, $I_0$ stands for the zero matrix.
\end{notation}
\begin{lemma}\label{lem:localunits}
The $k$-algebra $\Mat$ has {\em idempotent local units}, \ie for each finite family $A_s, s \in S$, of elements in $\Mat$ there exists an idempotent $E\in \Mat$ such that $E \cdot A_s = A_s \cdot E =A_s$ for all $s \in S$.
\end{lemma}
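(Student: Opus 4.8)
The plan is to produce $E$ explicitly as one of the matrices $I_n$ introduced in Notation~\ref{not:elem-matrices}. Since the index set $S$ is finite and $\Mat=\bigcup_{n=1}^{\infty}\mathsf{M}_n$, every $A_s$ lies in $\mathsf{M}_{n_s}$ for some $n_s\in\bbN$; putting $n:=\max_{s\in S} n_s$ and using the block inclusions $\mathsf{M}_{n_s}\subset\mathsf{M}_n$ of Definition~\ref{def:finMat}, we may regard all the $A_s$ as elements of the (unital) subalgebra $\mathsf{M}_n\subset\Mat$. I would then set $E:=I_n$.

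The verification has two parts. First, $I_n$ is an idempotent of $\Mat$: writing $I_n=\sum_{i=1}^{n}E_{ii}$ and using the multiplication rule for the elementary matrices recorded in Notation~\ref{not:elem-matrices} (namely $E_{kl}\cdot E_{nm}=E_{km}$ if $l=n$ and $0$ otherwise), one gets $I_n^2=\sum_{i,j=1}^{n}E_{ii}E_{jj}=\sum_{i=1}^{n}E_{ii}=I_n$. Second, $I_n$ is the unit of $\mathsf{M}_n$, so $I_n\cdot A=A=A\cdot I_n$ for every $A\in\mathsf{M}_n$, and in particular $I_n\cdot A_s=A_s\cdot I_n=A_s$ for all $s\in S$. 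Hence $E=I_n$ has the desired property.

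There is no real obstacle here; the only point to keep track of is that the transition maps $\mathsf{M}_m\hookrightarrow\mathsf{M}_n$ ($m\le n$) are ring homomorphisms compatible with the inclusions of the $I_j$'s, so that an element fixed on both sides by $I_{n_s}$ remains fixed by the larger idempotent $I_n$. This is immediate from the definition of the block embeddings.
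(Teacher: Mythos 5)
Your proof is correct and takes essentially the same approach as the paper: both choose $E=I_n$ for $n$ large enough that all the $A_s$ lie in $\mathsf{M}_n$, and observe that $I_n$ is an idempotent acting as the identity there. You merely spell out the idempotency and compatibility checks a bit more explicitly than the paper does.
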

\begin{proof}
Since the matrices $A_s, s \in S$, have only a finite number of non-zero entries there exist natural numbers $m_s, s \in S$, such that $(A_s)_{i,j}=0$ when $i$ or $j$ is greater than $m_s$. Let $m:=\mathrm{max}\{m_s\,|\,s \in S\}$. Then, if $E$ is the idempotent matrix $I_m$ we observe that $I_m \cdot A_s = A_s \cdot I_m=A_s$ for all $s \in S$.
\end{proof}

\begin{definition}\label{def:cone}
Let $\Gamma$ be the $k$-algebra of $\bbN\times \bbN$-matrices $A$ with coefficients in $k$ and satisfying the following two conditions\,:
\begin{itemize}
\item[(1)] the set $\{A_{i,j}\, |\, i, j \in \bbN \}$ is finite;
\item[(2)] there exists a natural number $n_A$ (which depends on $A$) such that each row and each column has at most $n_A$ non-zero entries. 
\end{itemize}
The $k$-module structure is defined entrywise and the multiplication is given by the ordinary matrix multiplication law; note that if $A, B \in \Gamma$ we can take $n_A \times n_B$ as the natural number $n_{A\cdot B}$. In contrast with $\Mat$, $\Gamma$ does {\em have} a unit object
\begin{equation*}
I_{i,j} := \left\{ \begin{array}{lcl}
{\bf 1} &  \text{if} & i=j\\
0 &  & \text{otherwise} \\
\end{array} \right.
\end{equation*}
Moreover, transposition of matrices induces an isomorphism of $k$-algebras
\begin{equation*}
(-)^T: \Gamma^\op \stackrel{\sim}{\too} \Gamma
\end{equation*}
which extends isomorphism \eqref{eq:transp1}.
\end{definition}
Now, let us fix a bijection
\begin{eqnarray*}
\theta: \bbN \stackrel{\sim}{\too} \bbN \times \bbN && n \mapsto (\theta_1(n), \theta_2(n))\,;
\end{eqnarray*}
take for instance the inverse of Cantor's classical pairing function. 
As in \cite[Lemma~19]{Schlichting}, we define a $k$-algebra homomorphism
\begin{eqnarray*}
\phi: \Gamma \too \Gamma && A \mapsto \phi(A)
\end{eqnarray*}
as follows 
\begin{equation*}
\phi(A)_{i,j} := \left\{ \begin{array}{lcl}
A_{\theta_1(i),\theta_1(j)} &  \text{if}  & \theta_2(i)=\theta_2(j) \\
0 &  & \text{otherwise} \\
\end{array} \right.
\end{equation*}
Note that the non-zero elements in line $i$, resp. in column $j$, of the matrix $\phi(A)$ are precisely the non-zero elements in line $i$, resp. in column $j$, of the matrix $A$. 
\begin{definition}\label{def:bimodule}
Let $W$ be the $\Gamma\text{-}\Gamma$-bimodule, which is $\Gamma$ as a left $\Gamma$-module, and whose right $\Gamma$-action is given by
\begin{eqnarray*}
\Gamma \times \Gamma \too \Gamma && (B,A) \mapsto B \cdot \phi(A)\,.
\end{eqnarray*}   
\end{definition}
\begin{lemma}\label{lem:flasque}
There exists a natural $\Gamma\text{-}\Gamma$-bimodule isomorphism $\Gamma\oplus W \stackrel{\sim}{\to} W$.
\end{lemma}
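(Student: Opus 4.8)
The plan is to produce the isomorphism explicitly by splitting the index set according to the fixed bijection $\theta$. Identifying $\bbN$ with $\bbN\times\bbN$, write $\bbN\times\bbN = (\bbN\times\{0\}) \sqcup \bigl(\bbN\times\{m\geq 1\}\bigr)$; the first piece is a copy of $\bbN$ and the second, after decreasing the second coordinate by $1$, is again a copy of $\bbN\times\bbN\cong\bbN$. Accordingly I would introduce two ``partial permutation'' matrices in $\Gamma$: let $S$ have $S_{\theta^{-1}(j,0),\,j}={\bf 1}$ and all other entries $0$, and let $M$ have $M_{\theta^{-1}(\theta_1(j),\,\theta_2(j)+1),\,j}={\bf 1}$ and all other entries $0$. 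Each has exactly one non-zero entry per column and at most one per row, so $S,M$ (and $S^T,M^T$) satisfy conditions (1)--(2) of Definition~\ref{def:cone}. Since the two column-images $\bbN\times\{0\}$ and $\bbN\times\{m\geq1\}$ are disjoint and cover $\bbN$, one reads off
\begin{equation*}
S^TS = I\,,\qquad M^TM = I\,,\qquad S^TM = M^TS = 0\,,\qquad SS^T + MM^T = I\,,
\end{equation*}
the last relation involving only disjointly supported diagonal idempotents, so the infinite sum is harmless.

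The second step is to record the interaction of $S,M$ with $\phi$. Using the description of $\phi(A)$ recalled before Definition~\ref{def:bimodule} (its non-zero entries in each line and column are exactly those of $A$, redistributed ``within the slices''), a direct entrywise check should give, for all $A\in\Gamma$,
\begin{equation*}
S^T\phi(A) = A\,S^T\,,\qquad M^T\phi(A) = \phi(A)\,M^T
\end{equation*}
(and, by transposition together with $\phi(A^T)=\phi(A)^T$, also $\phi(A)S = SA$ and $\phi(A)M = M\phi(A)$): the first identity says that restricting $\phi(A)$ to the $0$-th slice returns $A$, the second that $\phi(A)$ is invariant under the slice-shift.

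Granting these, the isomorphism drops out. I would define $f:\Gamma\oplus W\too W$ by $f(B,C):=B\,S^T + C\,M^T$, with candidate inverse $g:W\to\Gamma\oplus W$, $g(D):=(D\,S,\,D\,M)$. Left $\Gamma$-linearity of $f$ and $g$ is automatic, since left multiplication commutes with right multiplication. For the right $\Gamma$-action, recall that on the summand $\Gamma$ it is ordinary multiplication while on $W$ it is $B\cdot A=B\phi(A)$; then $f(BA,\,C\phi(A)) = BAS^T + C\phi(A)M^T = (BS^T + CM^T)\phi(A) = f(B,C)\phi(A)$ using $AS^T=S^T\phi(A)$ and $\phi(A)M^T=M^T\phi(A)$, so $f$ is a bimodule morphism (and similarly $g$, using the transposed identities). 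Finally $f\circ g(D) = DSS^T + DMM^T = D$ and $g\circ f(B,C) = (BS^TS+CM^TS,\,BS^TM+CM^TM) = (B,C)$ by the relations of the first step. Hence $f$ is a $\Gamma\text{-}\Gamma$-bimodule isomorphism $\Gamma\oplus W\stackrel{\sim}{\too} W$, and it is natural in the sense that it is built only from the fixed bijection $\theta$.

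I expect the only point requiring genuine care to be the two identities $S^T\phi(A)=AS^T$ and $M^T\phi(A)=\phi(A)M^T$, i.e.\ keeping straight which entries of $\phi(A)$ survive pre- and post-composition with the partial permutations $j\mapsto\theta^{-1}(j,0)$ and $j\mapsto\theta^{-1}(\theta_1(j),\theta_2(j)+1)$; the remaining verifications (membership in $\Gamma$, left-linearity, and that $f$, $g$ are mutually inverse) are purely formal consequences of the four relations displayed above.
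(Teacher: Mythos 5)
Your proof is correct and is essentially the paper's own argument: your partial-permutation matrices $S$ and $M$ are exactly the transposes of the paper's $\alpha$ and $\beta$ (defined by $\alpha_{i,j}={\bf 1}$ iff $\theta(j)=(i,0)$ and $\beta_{i,j}={\bf 1}$ iff $\theta(j)=\theta(i)+(0,1)$), so your maps $f(B,C)=BS^T+CM^T$ and $g(D)=(DS,DM)$ coincide with the paper's $(A,B)\mapsto A\alpha+B\beta$ and $B\mapsto(B\alpha^T,B\beta^T)$, and your eight relations are transposes of the paper's six. The two identities you flag as ``requiring genuine care'' ($S^T\phi(A)=AS^T$ and $M^T\phi(A)=\phi(A)M^T$, i.e.\ $\alpha\phi(A)=A\alpha$ and $\beta\phi(A)=\phi(A)\beta$) do hold by the entrywise computation you sketch, and they are precisely the ones the paper uses.
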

\begin{proof}
Consider the elements
\begin{eqnarray*}
\alpha_{i,j} := \left\{ \begin{array}{lcl}
{\bf 1} &  \text{if} & \theta(j)=(i,0) \\
0 &  & \text{otherwise} \\
\end{array} \right.  \
\end{eqnarray*}
and
\begin{eqnarray*}
\beta_{i,j} := \left\{ \begin{array}{lcl}
{\bf 1} &  \text{if} & \theta(j)=\theta(i)+(0,1) \\
0 &  & \text{otherwise} \\
\end{array} \right. 
\end{eqnarray*}
in $\Gamma$. Using $\alpha$ and $\beta$, we define maps\,:
\begin{eqnarray}
\Gamma\oplus W \too W && (A,B) \mapsto A \cdot \alpha + B \cdot \beta \label{eq:map1}\\
W \too \Gamma\oplus W && B \mapsto (B \cdot \alpha^T, B \cdot \beta^T)\label{eq:map2} \,. 
\end{eqnarray}
The map \eqref{eq:map1} is a left $\Gamma$-module homomorphism. The fact that it is also a right $\Gamma$-module homomorphism follows from the following equalities\,:
\begin{eqnarray*}
\beta \cdot \alpha^T =0 & \alpha \cdot \alpha^T = \beta \cdot \beta^T = I & \alpha^T \cdot \alpha + \beta^T \cdot \beta = I \,.
\end{eqnarray*}
Moreover, since for every $A \in \Gamma$ we have 
\begin{eqnarray*}
A\cdot \alpha = \alpha \cdot \phi(A) & \mathrm{and} & \phi(A)\cdot \beta = \beta \cdot \phi(A)
\end{eqnarray*}
we conclude that the maps \eqref{eq:map1} and \eqref{eq:map2} are inverse of each other.
\end{proof}
\begin{notation}\label{not:suspension}
Clearly the $k$-algebra $\Mat$ forms a two-sided ideal in $\Gamma$. We denote by $\Sigma$ the associated quotient $k$-algebra $\Gamma/\Mat$.
\end{notation}
Alternatively, we can describe the quotient $k$-algebra $\Sigma$ as follows.
\begin{proposition}\label{prop:alt-descr}
The matrices 
\begin{eqnarray*}
\overline{I_n}:= I - I_n && (\mathrm{see\,\,Notation}~\ref{not:elem-matrices})
\end{eqnarray*}
form a left denominator set S in $\Gamma$~\cite[\S4]{Lam}, \ie $I \in S$, $S\cdot S \subset S$ and
\begin{itemize}
\item[(i)] given $\overline{I_n} \in S$ and $E \in \Gamma$, there are $\overline{I_m} \in S$ and $E' \in \Gamma$ such that $E'\cdot\overline{I_n} =\overline{I_m} \cdot E$;
\item[(ii)] if $\overline{I_n} \in S$ and $E\in \Gamma$ satisfy $E \cdot \overline{I_n}=0$, there is $\overline{I_m}\in S$ such that $\overline{I_m}\cdot E=0$.
\end{itemize}
Moreover, the localized $k$-algebra $\Gamma[S^{-1}]$\footnote{Since $S$ is a left denominator set this $k$-algebra is given by left fractions, \ie equivalence classes of pairs $(\overline{I_n},E)$ modulo the relation which identifies $(\overline{I_n},E)$ with $(\overline{I_m},E')$ if there are $B, B' \in \Gamma$ such that $B\cdot \overline{I_n} = B'\cdot \overline{I_m}$ belongs to $S$ and $B\cdot E = B'\cdot E'$.} is naturally isomorphic to $\Sigma$.
\end{proposition}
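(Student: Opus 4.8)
The plan is to verify directly that $S=\{\overline{I_n}\mid n\geq 0\}$ satisfies the axioms of a left denominator set, and then to identify $\Gamma[S^{-1}]$ with $\Sigma=\Gamma/\Mat$ through the universal property of the (left) Ore localization. Almost everything is elementary matrix bookkeeping; the single point I would pay attention to is to invoke, at each truncation step, the appropriate half of condition~(2) of Definition~\ref{def:cone} (the bound on the number of non-zero entries per column, or per row), which guarantees that the surviving matrix is in fact \emph{finite}, hence lies in $\Mat$.

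First I would record that $\overline{I_n}$ is the diagonal idempotent projecting onto the coordinates of index $>n$; thus $\overline{I_0}=I\in S$ and $\overline{I_n}\cdot\overline{I_m}=\overline{I_{\max(n,m)}}\in S$, so that $S$ is closed under multiplication and contains the unit. For axiom~(i), given $\overline{I_n}\in S$ and $E\in\Gamma$, condition~(2) shows that the columns $1,\dots,n$ of $E$ carry together at most $n\cdot n_E$ non-zero entries, so there is an $m$ with $E_{i,j}=0$ whenever $j\leq n$ and $i>m$. Then the first $n$ columns of $\overline{I_m}\cdot E$ vanish identically --- the rows of index $\leq m$ are killed by $\overline{I_m}$, and in the rows of index $>m$ the entries of $E$ in the columns of index $\leq n$ are already zero --- whence $(\overline{I_m}\cdot E)\cdot\overline{I_n}=\overline{I_m}\cdot E$, and one may take $E':=\overline{I_m}\cdot E\in\Gamma$. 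For axiom~(ii), $E\cdot\overline{I_n}=0$ forces all non-zero entries of $E$ into the columns $1,\dots,n$, so by condition~(2) again $E$ has only finitely many non-zero entries, i.e. $E\in\Mat$, and therefore $\overline{I_m}\cdot E=0$ for $m$ large enough.

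For the isomorphism, note that in $\Sigma=\Gamma/\Mat$ one has $\overline{I_n}=I-I_n\equiv I$ because $I_n\in\Mat$; hence the quotient homomorphism $q\colon\Gamma\to\Sigma$ carries $S$ into the units of $\Sigma$, and the universal property of the left localization yields a unique $k$-algebra homomorphism $\widetilde{q}\colon\Gamma[S^{-1}]\to\Sigma$ with $\widetilde{q}(\overline{I_n}^{-1}E)=q(\overline{I_n})^{-1}q(E)=\overline{E}$; it is surjective since $q$ is. For injectivity I would compute the kernel of the localization map $\iota\colon\Gamma\to\Gamma[S^{-1}]$, which for a left denominator set is the $S$-torsion $\{A\in\Gamma\mid \overline{I_m}\cdot A=0\text{ for some }m\}$. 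Now $\overline{I_m}\cdot A=0$ says exactly that all rows of $A$ of index $>m$ vanish, and for $A\in\Gamma$ this leaves at most $m\cdot n_A$ non-zero entries, i.e. $A\in\Mat$; conversely every element of $\Mat$ is annihilated on the left by some $\overline{I_m}$. Thus $\iota$ induces a monomorphism $\overline{\iota}\colon\Sigma=\Gamma/\Mat\hookrightarrow\Gamma[S^{-1}]$. One checks that $\widetilde{q}\circ\overline{\iota}=\id_\Sigma$, while $\overline{\iota}\circ\widetilde{q}$ coincides with $\id_{\Gamma[S^{-1}]}$ after precomposition with $\iota$ and hence equals it by the uniqueness clause of the universal property; therefore $\widetilde{q}$ is an isomorphism.

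I do not anticipate a serious obstacle: the statement is just the combination of the general formalism of left Ore localization with a couple of small finiteness remarks. The only genuinely delicate point is to keep the row bound and the column bound of condition~(2) apart --- it is the column bound that is used when one truncates away columns (axioms~(i) and~(ii)) and the row bound that is used in the kernel computation --- so that in each case the truncated matrix really does have only finitely many non-zero entries and thus belongs to $\Mat$.
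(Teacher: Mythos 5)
Your proof is correct and follows essentially the same route as the paper: closure of $S$ under products, a column-truncation argument to verify the left Ore condition~(i), finiteness of a column-truncated matrix for condition~(ii), and then the two universal properties (of $\Gamma[S^{-1}]$ and of the quotient $\Gamma/\Mat$) to manufacture mutually inverse maps. Two small differences: you dispense with the paper's block-matrix notation and work directly with the row/column bounds of Definition~\ref{def:cone}, which makes it clearer exactly which half of condition~(2) is used at each step (column bound for (i) and (ii), row bound for the kernel); and you prove the sharper statement that the kernel of $\Gamma\to\Gamma[S^{-1}]$ is \emph{exactly} $\Mat$, whereas the paper only needs, and only checks, the containment $\Mat\subseteq\ker$ (since the two maps are then shown to be inverse using the universal properties alone, the reverse containment comes for free). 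Your extra precision is harmless and arguably clarifies why $\Sigma$ appears, but is not strictly required for the conclusion.
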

\begin{proof}
In order to simplify the proof  we consider the following block-matrix graphical notation
\begin{equation*}
E = \text{\scriptsize $k$} \overset{l}{\left[
\begin{array}{c|c}
 E_a& E_b\\ \hline
E_c & E_d
\end{array}\right]
} \in \Gamma\,,
\end{equation*}
where $k, l \in \bbN$, $E_a$ is a $k \times l$-matrix, $E_b$ is a $k \times \bbN$-matrix, $E_c$ is a $\bbN\times l$-matrix, and $E_d$ is a $\bbN \times \bbN$-matrix. Under this notation we have, for $n \in \bbN$, the equalities
\begin{equation}\label{eq:graphic1}
\overline{I_n} \cdot \text{\scriptsize $n$}\overset{n}{\left[
\begin{array}{c|c}
 E_a& E_b\\ \hline
 E_c& E_d
\end{array}\right]
}  =  \text{\scriptsize $n$}\overset{n}{\left[
\begin{array}{c|c}
 0&0 \\ \hline
 E_c& E_d
\end{array}\right]
}   
\end{equation}
and
\begin{equation}\label{eq:graphic2}
\text{\scriptsize $n$}\overset{n}{\left[
\begin{array}{c|c}
 E_a&E_b \\ \hline
 E_c& E_d
\end{array}\right]
} \cdot \overline{I_n} = \text{\scriptsize $n$}\overset{n}{\left[
\begin{array}{c|c}
 \,0\,\,&E_b \\ \hline
 \,0\,\,& E_d
\end{array}\right]
} \,.
\end{equation}
By definition $I = \overline{I_0} \in S$. Equalities \eqref{eq:graphic1} and \eqref{eq:graphic2}, and the fact that $\overline{I_0}=I$, imply that
\begin{eqnarray*} 
\overline{I_n}\cdot \overline{I_m} = \overline{I_{\mathrm{max}\{n,m\}}} && n \geq 0\,.
\end{eqnarray*}
This shows that $S\cdot S \subset S$.

(i) Note first that when $n=0$ the claim is trivial. Since $E$ belongs to $\Gamma$, there exist natural numbers $m_j, 1\leq j\leq n$, such that $E_{i,j}=0$ for $ i \geq m_j$ and $1\leq j \leq n$. Take $m =\mathrm{max}\{n, m_j\,|\, 1 \leq j \leq n \}$. Then, we have the following equality
\begin{equation}\label{eq:matrix} 
\overline{I_m} \cdot  \text{\scriptsize $m$}\overset{n}{\left[
\begin{array}{c|c}
 E_a&E_b \\ \hline
 E_c& E_d
\end{array}\right]
} = \text{\scriptsize $m$} \overset{n}{\left[
\begin{array}{c|c}
 0\,\,\, &0  \\ \hline
 0\,\,\,&E_d 
\end{array}\right]
} \,.
\end{equation}
Since $m \geq n$, the above equality \eqref{eq:graphic2} shows us that we can take for $E'$ the above matrix \eqref{eq:matrix}. This proves the claim.

(ii) When $n=0$ the claim is trivial. If $E\cdot \overline{I_n}=0$ the above equality \eqref{eq:graphic2} shows us that 
\begin{equation}\label{eq:descr}
E =\text{\scriptsize $n$}\overset{n}{\left[
\begin{array}{c|c}
 E_a \!&\,\,0\, \\ \hline
 E_c\!&\, \,0 \, 
\end{array}\right]
}.
\end{equation}
Since $E$ belongs to $\Gamma$, there exist natural numbers $m_j, 1 \leq j \leq n$, such that $E_{i,j}=0$ for $i \geq m_j$ and $1 \leq j \leq n$. Take $m =\mathrm{max}\{m_j\,|\, 1 \leq j \leq n\}$. Then, the above description \eqref{eq:descr} combined with equality \eqref{eq:graphic2} show us that $\overline{I_m}\cdot E=0$. This proves the claim.

We now show that the localized $k$-algebra $\Gamma[S^{-1}]$ is naturally isomorphic to $\Sigma$. Since the matrices 
\begin{eqnarray*}
I_n=I-\overline{I_n}&&n \geq 0
\end{eqnarray*}
belong to $\Mat$, we conclude that all the elements of the set $S$ become the identity object in $\Sigma$. Therefore, by the universal property of $\Gamma[S^{-1}]$ we obtain a $k$-algebra map
\begin{equation}\label{eq:k-alg1}
\Gamma[S^{-1}] \too \Sigma\,.
\end{equation}
On the other hand, the kernel of the localization map $\Gamma \to \Gamma[S^{-1}]$ consists of those matrices $E \in \Gamma$ for which $\overline{I_n}\cdot E=0$ for some $n \geq 0$. Thanks to equality \eqref{eq:graphic1} we observe that the elements of $\Mat$ satisfy this condition. Therefore, by the universal property of $\Sigma:= \Gamma/\Mat$ we obtain a $k$-algebra map
\begin{equation}\label{eq:k-alg2}
\Sigma \too \Gamma[S^{-1}]\,.
\end{equation}
The maps \eqref{eq:k-alg1} and \eqref{eq:k-alg2} are clearly inverse of each other and so the proof is finished.
\end{proof}

\begin{lemma}\label{lem:flat}
The algebras $\Mat$, $\Gamma$ and $\Sigma$ are flat as $k$-modules.
\end{lemma}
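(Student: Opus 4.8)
The plan is the following. The algebra $\Mat$ requires nothing: by Definition~\ref{def:finMat} every element of $\Mat$ is a \emph{finite} $k$-linear combination of the elementary matrices $E_{kl}$ (Notation~\ref{not:elem-matrices}), and these are visibly $k$-linearly independent, so $\Mat$ is a free $k$-module, hence flat. For $\Gamma$ and $\Sigma$ I would apply the equational criterion for flatness: a $k$-module $P$ is flat as soon as, for every relation $\sum_{i=1}^r a_i x_i = 0$ with $a_i \in k$ and $x_i \in P$, there exist $y_1,\dots,y_s \in P$ and $b_{ij}\in k$ with $x_i = \sum_j b_{ij} y_j$ for all $i$ and $\sum_i a_i b_{ij} = 0$ for all $j$. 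The whole content will be a finiteness statement extracted from conditions $(1)$ and $(2)$ of Definition~\ref{def:cone}.

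For $\Gamma$ I would start from $a_1,\dots,a_r \in k$ and $B_1,\dots,B_r \in \Gamma$ with $\sum_{i=1}^r a_i B_i = 0$, and let $K := \{\, v \in k^r \mid \sum_i a_i v_i = 0 \,\}$ be the kernel of $v \mapsto \sum_i a_i v_i$. To each $(p,q) \in \bbN \times \bbN$ I associate the vector $v(p,q) := ((B_1)_{p,q},\dots,(B_r)_{p,q}) \in k^r$; the relation says precisely that $v(p,q) \in K$ for all $(p,q)$. The key point is that condition $(1)$ forces $\{\, v(p,q) \mid (p,q)\in\bbN\times\bbN \,\}$ to be a \emph{finite} subset of $K$. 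Let $w_1,\dots,w_s$ be its nonzero members, write $w_j = (b_{1j},\dots,b_{rj})$ (so $\sum_i a_i b_{ij} = 0$ for each $j$, since $w_j \in K$), and let $C_j$ be the $\bbN\times\bbN$-matrix with $(C_j)_{p,q} = {\bf 1}$ when $v(p,q) = w_j$ and $0$ otherwise. I would then check $C_j \in \Gamma$: it obviously has finitely many distinct entries, and — since $w_j \neq 0$, so some fixed coordinate $(w_j)_{i}$ is nonzero — its support is contained in that of the corresponding $B_{i}$, whence by condition $(2)$ each of its rows and columns has at most $n_{B_i}$ nonzero entries. Finally $B_i = \sum_{j=1}^s b_{ij} C_j$ for each $i$, checked entrywise at $(p,q)$: both sides vanish when $v(p,q)=0$, and both equal $(w_j)_i = b_{ij}$ when $v(p,q)=w_j$. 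This is exactly the data demanded by the criterion, so $\Gamma$ is $k$-flat.

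For $\Sigma = \Gamma/\Mat$ (Notation~\ref{not:suspension}) I would run the same argument modulo the ideal $\Mat$. Given $a_i \in k$ and $\overline{B_i}\in\Sigma$ with $\sum_i a_i \overline{B_i} = 0$, the matrix $\sum_i a_i B_i$ lies in $\Mat$, hence is supported inside a finite corner $\{1,\dots,m\}\times\{1,\dots,m\}$. Then $v(p,q)\in K$ for all $(p,q)$ \emph{outside} that corner, this set of vectors is again finite, and I define $C_1,\dots,C_s\in\Gamma$ exactly as above using only the entries outside the corner. Now $B_i - \sum_j b_{ij}C_j$ is supported inside $\{1,\dots,m\}\times\{1,\dots,m\}$, hence lies in $\Mat$, so $\overline{B_i} = \sum_j b_{ij}\overline{C_j}$ in $\Sigma$, while still $\sum_i a_i b_{ij} = 0$. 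Thus $\Sigma$ is $k$-flat.

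The one step that really demands attention — the main obstacle, such as it is — is verifying that the auxiliary $\{0,{\bf 1}\}$-matrices $C_j$ lie in $\Gamma$ (resp. descend to $\Sigma$); this is precisely where hypothesis $(2)$ of Definition~\ref{def:cone} enters, via the observation that for $w_j\neq 0$ the support of $C_j$ sits inside the support of one of the given $B_i$. Hypothesis $(1)$ is equally indispensable: it is what makes the index set $\{\, v(p,q) \,\}$ — hence the number of auxiliary matrices and coefficients produced — finite. An alternative would be to reduce to $k=\bbZ$, where flatness is just torsion-freeness and all three algebras are visibly torsion-free over $\bbZ$; but transporting this back to a general $k$ amounts to identifying $\Gamma$ with $\Gamma_{\bbZ}\otimes_{\bbZ}k$, which requires the same indicator-matrix decomposition, so little is gained.
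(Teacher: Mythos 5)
Your proof is correct, but it takes a genuinely different route from the paper. The paper first settles the case $k=\bbZ$: $(\Mat)_{\bbZ}$ and $\Gamma_{\bbZ}$ are torsion-free, hence flat (Weibel, Corollary~3.1.5), and $\Sigma_{\bbZ}$ is flat because by Proposition~\ref{prop:alt-descr} it is a left localization of $\Gamma_{\bbZ}$; it then passes to general $k$ by the base-change isomorphisms $(\Mat)_{\bbZ}\otimes_{\bbZ}k \simeq (\Mat)_k$, $\Gamma_{\bbZ}\otimes_{\bbZ}k \simeq \Gamma_k$, $\Sigma_{\bbZ}\otimes_{\bbZ}k \simeq \Sigma_k$, citing Corti\~nas--Thom for the latter two. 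You instead work directly over $k$ and verify the equational criterion for flatness, using condition~(1) of Definition~\ref{def:cone} to produce the finite family of $\{0,{\bf 1}\}$-indicator matrices $C_j$ and condition~(2) to check that each $C_j$ lies in $\Gamma$ (via the support-containment observation), then run the same scheme modulo $\Mat$ for $\Sigma$. Your version is more self-contained — no appeal to an external base-change lemma and no localization argument for $\Sigma$ — and, as you correctly remark at the end, it isolates exactly the finiteness phenomenon that the paper's cited isomorphism $\Gamma_{\bbZ}\otimes_{\bbZ}k\simeq\Gamma_k$ also rests on; the paper's version is shorter because it can lean on that quoted lemma and on the general flatness of localizations. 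One small improvement you make in passing: for $\Mat$ you note it is actually \emph{free} over $k$ on the basis $\{E_{kl}\}$, which is both simpler and stronger than the torsion-free-plus-base-change argument.
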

\begin{proof}
We start by proving this proposition in the particular case where the base ring $k$ is $\bbZ$. In this case the underlying $\bbZ$-modules of $(\Mat)_{\bbZ}$ and $\Gamma_{\bbZ}$ are torsionfree and so by \cite[Corollary~3.1.5]{Weibel}, they are flat. Thanks to Proposition~\ref{prop:alt-descr}, $\Sigma_\bbZ$ identifies with the (left) localization of $\Gamma_\bbZ$ with respect to the set $S$, and so a standard argument shows us that the right $\Gamma_{\bbZ}$-module $\Sigma_{\bbZ}$ is flat. Since $\Gamma_{\bbZ}$ is flat as a $\bbZ$-module, we conclude that $\Sigma_{\bbZ}$ is also flat as a $\bbZ$-module.

Let us now consider the general case. Clearly we have a natural isomorphism of $k$-modules
$$ (\Mat)_{\bbZ} \otimes_{\bbZ} k \stackrel{\sim}{\too} (\Mat)_k\,. $$
Thanks to \cite[Lemma~4.7.1]{Cortinas}, we have also natural isomorphisms of $k$-modules
\begin{eqnarray*}
\Gamma_{\bbZ} \otimes_{\bbZ} k \stackrel{\sim}{\too} \Gamma_k & \mathrm{and} & \Sigma_{\bbZ} \otimes_{\bbZ} k \stackrel{\sim}{\too} \Sigma_k\,.
\end{eqnarray*}
Therefore, since flat modules are stable under extension of scalars, the proof is achieved.
\end{proof}

\section{An exact sequence}\label{sub:exact}
Let $H$ be a $k$-algebra and $J \subset H$ a two-sided ideal.
\begin{definition}
The {\em category $\cJ$ of idempotents of $J$} is defined as follows\,: its objects are the symbols ${\bf u}$, where $u$ is an idempotent of $J$; the $k$-module $\cJ(\mathbf{u}, {\bf u'})$ of morphisms from ${\bf u}$ to ${\bf u'}$ is $uJu'$; composition is given by multiplication in $J$ and the unit of each object ${\bf u}$ is the idempotent $u$.
Associated to $H$ and $J$ there is also a $\cJ\text{-}\underline{H}$-bimodule $X$ such that $X({\bf u}, \ast):= uJ$, with left and right actions given by multiplication.
\end{definition}
Recall from \cite[Example~3.3(b)]{Keller} that if $H$ and $J$ are flat as $k$-modules and $J$ has {\em idempotent local units} (\ie for each finite family $a_s, s \in S$, of elements in $J$ there exists an idempotent $u \in J$ such that $u a_s =a_s u = a_s$ for all $s \in S$) we have a exact sequence in $\Hmo$
\begin{equation*}
0 \too \cJ \stackrel{X}{\too} \underline{H} \too \underline{H/J} \too 0\,.
\end{equation*}
Thanks to Lemmas~\ref{lem:localunits} and~\ref{lem:flat}, if we take $H=\Gamma$ and $J= \Mat$, we then obtain then the following exact sequence in $\Hmo$
\begin{equation}\label{eq:ses1}
0 \too \cM_{\infty} \stackrel{X}{\too} \underline{\Gamma} \too \underline{\Sigma} \too 0\,.
\end{equation}
\begin{proposition}
The dg functor
\begin{eqnarray}\label{eq:Moritaeq}
\underline{k} \too \cM_\infty &  \ast \mapsto {\bf E_{11}}& (\text{see\,\,Notation}~\ref{not:elem-matrices})
\end{eqnarray}
is a derived Morita equivalence.
\end{proposition}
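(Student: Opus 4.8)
The plan is to show that the dg functor $\underline{k} \to \cM_\infty$ sending $\ast \mapsto \mathbf{E_{11}}$ induces an equivalence on derived categories, which by definition is what it means to be a derived Morita equivalence. First I would unwind what $\cM_\infty$ is: its objects are the symbols $\mathbf{u}$ for $u$ an idempotent of $\Mat$, with $\cM_\infty(\mathbf{u},\mathbf{u}')= u\,\Mat\,u'$. The functor lands on the object $\mathbf{E_{11}}$, whose endomorphism ring is $E_{11}\,\Mat\,E_{11} \cong k$. So the induced functor on derived categories is $-\otimes^{\bbL}_k X$ for the appropriate bimodule; equivalently, I would invoke the standard criterion (see \cite[\S3]{Toen} or \cite[\S4.6]{ICM}) that a dg functor $F:\cA\to\cB$ is a derived Morita equivalence if and only if the induced functor $\bbL F_!:\cD(\cA)\to\cD(\cB)$ is an equivalence, and that this holds precisely when the image of $\cA$ generates $\cD(\cB)$ (as a localizing subcategory) and $F$ is quasi-fully-faithful on the relevant Hom-complexes.

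The two things to verify are therefore: (1) quasi-full-faithfulness, i.e. the map $k = \underline{k}(\ast,\ast) \to \cM_\infty(\mathbf{E_{11}},\mathbf{E_{11}}) = E_{11}\,\Mat\,E_{11}$ is a quasi-isomorphism; and (2) the object $\mathbf{E_{11}}$ generates $\cD(\cM_\infty)$. For (1), the matrices in $E_{11}\,\Mat\,E_{11}$ are precisely the scalar multiples of $E_{11}$, so this ring is $k\cdot E_{11}\cong k$ and the map is an isomorphism; since all complexes here are concentrated in degree zero there is nothing further to check. For (2), I would argue that every object $\mathbf{u}$ of $\cM_\infty$ (hence every representable $\cM_\infty$-module, and hence all of $\cD(\cM_\infty)$, which is compactly generated by the representables) lies in the localizing subcategory generated by the image of $\mathbf{E_{11}}$. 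The key point is that for any idempotent $u = I_n$ one has $\mathbf{E_{11}} \cong \mathbf{E_{ii}}$ in $\cM_\infty$ via the mutually inverse morphisms $E_{1i}\in E_{11}\Mat E_{ii}$ and $E_{i1}\in E_{ii}\Mat E_{11}$, and $\mathbf{I_n} = \mathbf{E_{11}}\oplus\cdots\oplus\mathbf{E_{nn}}$ is a finite direct sum of copies of $\mathbf{E_{11}}$ (using $I_n = E_{11}+\cdots+E_{nn}$ as an orthogonal idempotent decomposition). A general idempotent $u\in\Mat$ lies in some $\mathsf{M}_n$, hence $u = I_n u = u I_n$, so $\mathbf{u}$ is a direct summand of $\mathbf{I_n}$, hence of a finite sum of copies of $\mathbf{E_{11}}$; in particular $\mathbf{u}$ lies in the thick subcategory generated by $\mathbf{E_{11}}$. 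Since $\cM_\infty = \bigcup_n \mathsf{M}_n$ has idempotent local units (Lemma~\ref{lem:localunits}), the representable modules $\mathbf{u}^{\wedge}$ generate $\cD(\cM_\infty)$, and we have just seen each one is built from $\mathbf{E_{11}}^{\wedge}$; so $\mathbf{E_{11}}$ generates.

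Putting these together: $\bbL F_!$ is fully faithful on the compact generator $\mathbf{E_{11}}\mapsto E_{11}$ (by (1)) and its essential image generates the target (trivially, as $k$ is a single object whose associated representable generates $\cD(\underline{k})$) while its source object $\mathbf{E_{11}}$ generates $\cD(\cM_\infty)$ (by (2)); hence $\bbL F_!:\cD(\cM_\infty)\to\cD(\underline k)$ is an equivalence. Actually it is cleaner to run the argument in the other direction, since $\underline{k}\to\cM_\infty$ is the inclusion: $\bbL F_!$ sends the generator $k^{\wedge}$ of $\cD(\underline k)$ to $\mathbf{E_{11}}^{\wedge}$, which generates $\cD(\cM_\infty)$ by (2), and $F$ is quasi-fully-faithful by (1); a functor between compactly generated triangulated categories which is fully faithful on a compact generator and whose image generates is an equivalence. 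I expect the main obstacle — such as it is — to be purely bookkeeping: namely checking carefully that the representables $\mathbf{u}^{\wedge}$ for idempotents $u$ do compactly generate $\cD(\cM_\infty)$ despite $\Mat$ being non-unital (this is exactly where the idempotent-local-units hypothesis of Lemma~\ref{lem:localunits} is used, via Keller's formalism in \cite[Example~3.3(b)]{Keller} and \cite[\S2.3]{ICM}), and that finite orthogonal idempotent decompositions in $\Mat$ really do give direct sum decompositions of objects in $\cM_\infty$.
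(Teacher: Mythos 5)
Your proof is correct and the underlying combinatorics are identical to the paper's, but the wrapper is genuinely different. The paper proves the \emph{stronger, underived} statement that the functor is an honest Morita equivalence in the sense of Schwede: it verifies that the representable $\widehat{\mathbf{E_{11}}}$ is a small projective generator of the abelian category $\mathrm{Mod}\text{-}\cM_\infty$ with endomorphism ring $k$ (citing \cite[Theorems~2.2 and 2.5]{Schwede}), which a fortiori gives a derived Morita equivalence. You instead invoke the derived criterion directly: quasi-full-faithfulness on the generator plus compact generation of $\cD(\cM_\infty)$ by the image. Both proofs end up checking exactly the same three matrix facts --- $\mathbf{E_{11}}\cong\mathbf{E_{ii}}$ via $E_{1i}, E_{i1}$; $\mathbf{I_n}$ is built from the $\mathbf{E_{ii}}$'s; a general $\mathbf{u}$ satisfies $u = I_m u = u I_m$ for $m\gg 0$ and is hence a retract of $\mathbf{I_m}$ --- so the difference is entirely in packaging. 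The paper's approach has the advantage of yielding the stronger abelian-level statement for free; yours stays entirely inside the derived world, which is arguably more in the spirit of the definition being verified. One small remark: your appeal to Lemma~\ref{lem:localunits} to justify that the representables generate $\cD(\cM_\infty)$ is unnecessary --- representables compactly generate the derived category of \emph{any} small dg category, no hypotheses needed; the idempotent-local-units condition on $\Mat$ is used elsewhere (to invoke Keller's localization sequence \cite[Example~3.3(b)]{Keller}), not here. Also, you claim $\widehat{\mathbf{I_n}}\cong\bigoplus_{i=1}^n\widehat{\mathbf{E_{ii}}}$ as an isomorphism where the paper only exhibits a surjection; your stronger claim is correct (the orthogonal idempotents $E_{11},\dots,E_{nn}$ summing to $I_n$ give mutually inverse biproduct maps) and slightly tidier.
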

\begin{proof}
We will prove a stronger statement, namely that the above functor \eqref{eq:Moritaeq} is a Morita equivalence; see \cite[\S2]{Schwede}. The category $\cM_\infty$ is by definition enriched over $k$-modules and the classical theory of Morita holds in this setting. Let $\mathrm{Mod}\text{-}\cM_\infty$ be the abelian category of right $\cM_\infty$-modules (\ie contravariant $k$-linear functors from $\cM_\infty$ to $k$-modules) and
\begin{eqnarray*}
\widehat{(-)}: \cM_\infty \too \mathrm{Mod}\text{-}\cM_\infty && {\bf E} \mapsto \cM_\infty(-, {\bf E})=: \widehat{{\bf E}}
\end{eqnarray*}
the (enriched) Yoneda functor. Following \cite[Theorems~2.2 and 2.5]{Schwede}, we need to show that $\widehat{{\bf E_{11}}}$ is a small projective generator of $\mathrm{Mod}\text{-}\cM_\infty$ and that its ring of endomorphisms is isomorphic to $k$. Note that we have natural isomorphisms
$$ \Hom_{\mathrm{Mod}\text{-}\cM_\infty}(\widehat{{\bf E_{11}}}, \widehat{{\bf E_{11}}}) \simeq \cM_\infty({\bf E_{11}}, {\bf E_{11}})= E_{11}\cdot \Mat \cdot E_{11} \simeq k\,.$$
Moreover, $\widehat{{\bf E_{11}}}$ is small and projective by definition. Therefore, it only remains to show that $\widehat{{\bf E_{11}}}$ is a generator, \ie, that every right $\cM_\infty$-module $P$ is an epimorphic image of a sum of (possibly infinitely many) copies of $\widehat{{\bf E_{11}}}$. Given an object ${\bf E}$ in $\cM_\infty$ we have, by the (enriched) Yoneda lemma, an isomorphism
$$ \Hom_{\mathrm{Mod}\text{-}\cM_\infty}(\widehat{{\bf E}}, P) \simeq P({\bf E})$$
and so we obtain a natural epimorphism
$$
\xymatrix{
\bigoplus_{{\bf E} \in \cM_\infty} \bigoplus_{P({\bf E})} \widehat{{\bf E}} \ar@{->>}[r] & P\,.
}$$
This shows that it suffices to treat the case where $P$ is of shape $\widehat{{\bf E}}$. We consider first the cases ${\bf E}={\bf E_{nn}}, n \in \bbN$. The following morphisms in $\cM_\infty$
$$
\xymatrix{
{\bf E_{11}} \ar[rr]^{E_{11}\cdot E_{1n}\cdot E_{nn}} && {\bf E_{nn}} & \mathrm{and} & {\bf E_{nn}} \ar[rr]^{E_{nn}\cdot E_{n1}\cdot E_{11}} && {\bf E_{11}} \\
}
$$
show us that ${\bf E_{11}}$ and ${\bf E_{nn}}$ are isomorphic and so the claim follows.

We consider now the cases ${\bf E}={\bf I_m}, m \in \bbN$. The natural morphisms in $\cM_\infty$
$$
\xymatrix{
{\bf E_{nn}} \ar[rr]^-{E_{nn} \cdot E_{nn} \cdot I_m} && {\bf I_m} &  1\leq n \leq m
}
$$
give rise to a map in $\mathrm{Mod}\text{-}\cM_\infty$
\begin{equation}\label{eq:induced}
\bigoplus_{n \geq 1}^m \, \widehat{{\bf E_{nn}}} \too \widehat{{\bf I_m}}\,.
\end{equation}
In order to show that the map \eqref{eq:induced} is surjective, we need to show that its evaluation
\begin{equation}\label{eq:induced1}
\bigoplus_{n \geq 1}^m \, \cM_\infty({\bf B}, {\bf E_{nn}}) \too \cM_\infty({\bf B}, {\bf I_m})
\end{equation}
at each object ${\bf B}$ of $\cM_\infty$ is surjective. We have $I_m = \sum_{n \geq 1}^m E_{nn}$, and so \eqref{eq:induced1} identifies with the natural map
$$ \bigoplus_{n\geq 1}^m\, \left(B\cdot \Mat \cdot E_{nn}\right) \too B \cdot \Mat \cdot \left(\sum_{n\geq 1}^m E_{nn}\right)\,,$$
which is easily seen to be surjective. Since ${\bf E_{11}}$ is isomorphic to ${\bf E_{nn}}, n \in \bbN$, the claim is proved.

Finally, we consider the case of a general object ${\bf E}$ in $\cM_\infty$. Since $E$ has only a finite number of non-zero entries there exists a natural number $m$ such that $E_{i,j}=0$ when $i$ or $j$ is greater than $m$. We have then the equality
$$E \cdot I_m = I_m \cdot E = E\,.$$
This implies that the composition
$$
\xymatrix{
{\bf E} \ar[rr]^{E \cdot I_m \cdot I_m} && {\bf I_m} \ar[rr]^{I_m \cdot I_m \cdot  E} && {\bf E}
}
$$
equals the identity map of the object ${\bf E}$. Since the abelian category $\mathrm{Mod}\text{-}\cM_\infty$ is idempotent complete, we conclude that the right $\cM_\infty$-module $\widehat{{\bf E}}$ is a direct factor of $\widehat{\bf I_m}$. This achieves the proof. 
\end{proof}
By combining the exact sequence \eqref{eq:ses1} with the derived Morita equivalence \eqref{eq:Moritaeq} we obtain an exact sequence in $\Hmo$
\begin{equation}\label{eq:ses2}
0 \too \underline{k} \too \underline{\Gamma} \too \underline{\Sigma} \too 0\,.
\end{equation}
\begin{notation}\label{not:principal}
Given a dg category $\cA$, we denote by $\Gamma(\cA)$ the dg category $\underline{\Gamma} \otimes \cA$ and by $\Sigma(\cA)$ the dg category $\underline{\Sigma} \otimes \cA$; see \S\ref{sub:bimodules}.
\end{notation}
\begin{proposition}\label{prop:main1}
For every dg category $\cA$ we have an exact sequence in $\Hmo$
$$ 0 \too \cA \too \Gamma(\cA) \too \Sigma(\cA) \too 0\,.$$
\end{proposition}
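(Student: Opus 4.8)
The plan is to deduce the statement from the exact sequence \eqref{eq:ses2} in $\Hmo$, namely $0\to\underline k\to\underline\Gamma\to\underline\Sigma\to 0$, by applying to it the derived tensor product functor $-\otimes^\bbL\cA\colon\Hmo\to\Hmo$. The first, purely formal, step is to identify the three resulting dg categories. Since $\underline k$ is the $\otimes$-unit we have $\underline k\otimes^\bbL\cA\simeq\cA$. For the other two I would invoke Lemma~\ref{lem:flat}: because $\Gamma$ and $\Sigma$ are flat $k$-modules, the one-object dg categories $\underline\Gamma$ and $\underline\Sigma$ are $k$-flat, so the underived tensor products $\underline\Gamma\otimes\cA=\Gamma(\cA)$ and $\underline\Sigma\otimes\cA=\Sigma(\cA)$ of Notation~\ref{not:principal} already represent $\underline\Gamma\otimes^\bbL\cA$ and $\underline\Sigma\otimes^\bbL\cA$ in $\Hmo$. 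Thus, modulo the next step, applying $-\otimes^\bbL\cA$ to \eqref{eq:ses2} produces exactly the sequence $0\to\cA\to\Gamma(\cA)\to\Sigma(\cA)\to0$.

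What remains --- and this is the heart of the matter --- is to show that $-\otimes^\bbL\cA$ carries exact sequences in $\Hmo$ to exact sequences. I would argue through the definition of \S\ref{sub:exactseq}. An exact sequence $0\to\cB_1\to\cB_2\to\cB_3\to0$ in $\Hmo$ can be modeled, up to derived Morita equivalence, by a full dg inclusion $\cB_1\hookrightarrow\cB_2$ together with an identification of $\cB_3$ with the Drinfeld dg quotient $\cB_2/\cB_1$ up to direct factors; equivalently it is encoded by the exact sequence of triangulated categories $0\to\cD(\cB_1)\to\cD(\cB_2)\to\cD(\cB_3)\to0$. Tensoring with (a $k$-flat, e.g.\ cofibrant, replacement of) $\cA$ preserves full dg inclusions, commutes with the Drinfeld dg quotient, and --- since the module categories $\cD(\cB_i\otimes\cA)$ are assembled ``coordinatewise'' from $\cD(\cB_i)$ --- transports the full faithfulness of the inclusion and the cofinality of the quotient functor from the $\cB_i$-level to the $\cB_i\otimes\cA$-level. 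Hence the tensored sequence is again exact. This preservation property is part of the compatibility of the symmetric monoidal structure on $\Hmo$ with exact sequences that underlies the construction of non-commutative motives.

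I expect the genuine difficulty to lie precisely in this second step, and within it in the ``cofinality'' clause of exactness: checking that after tensoring with $\cA$ the canonical functor from the Verdier quotient $\cD(\Gamma(\cA))/\cD(\cA)$ to $\cD(\Sigma(\cA))$ is still fully faithful with every object a direct summand of one coming from the quotient --- i.e.\ controlling the idempotent-completion subtlety built into the definition of exact sequences --- whereas the flatness bookkeeping of the first step and the vanishing of the composite are routine. Once the preservation of exactness is established, combining it with the identifications above yields the desired exact sequence $0\to\cA\to\Gamma(\cA)\to\Sigma(\cA)\to0$ in $\Hmo$.
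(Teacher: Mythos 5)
Your proposal follows exactly the paper's route: start from the exact sequence \eqref{eq:ses2}, apply $-\otimes^\bbL\cA$, use Lemma~\ref{lem:flat} to replace derived by underived tensor products, and use $\underline{k}\otimes^\bbL\cA\simeq\cA$. The one place where you diverge is also the one place you yourself flag as ``the heart of the matter'': the preservation of exactness under $-\otimes^\bbL\cA$. The paper does not reprove this; it simply invokes \cite[Proposition~1.6.3]{Drinfeld}, which is precisely the statement that Drinfeld's dg quotient is compatible (up to Morita equivalence, hence up to the idempotent-completion issue you identify) with tensoring by a homotopically flat dg category. Your sketch --- full dg inclusions are preserved, the dg quotient commutes with $\otimes\cA$, and the derived categories are ``assembled coordinatewise'' --- gestures at the right ideas but is not a proof: the passage from full faithfulness of $\cD(\cB_1)\to\cD(\cB_2)$ to full faithfulness of $\cD(\cB_1\otimes\cA)\to\cD(\cB_2\otimes\cA)$, and likewise the cofinality of the quotient after tensoring, are exactly the nontrivial points, and ``coordinatewise assembly'' does not by itself yield them. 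Concretely, you should replace that paragraph by the citation to Drinfeld's Proposition~1.6.3 (or, if you want a self-contained argument, actually reproduce his proof that the dg quotient commutes with $-\otimes\cA$ for $\cA$ $k$-flat). With that substitution your argument coincides with the paper's.
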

\begin{proof}
The exact sequence~\eqref{eq:ses2} and \cite[Proposition~1.6.3]{Drinfeld} lead to the exact sequence in $\Hmo$
$$ 0 \too \underline{k}\otimes^{\bbL} \cA \too \underline{\Gamma}\otimes^{\bbL}\cA \too \underline{\Sigma}\otimes^{\bbL}\cA \too 0 \,.$$
By Lemma~\ref{lem:flat} the algebras $\Gamma$ and $\Sigma$ are flat as $k$-modules and so the derived tensor products are identified in $\Hmo$ with the ordinary ones. Moreover, we have a natural isomorphism $\underline{k}\otimes^{\bbL} \cA \simeq \cA$.
\end{proof}

\section{Flasqueness of $\Gamma$}\label{sub:flasque}
\begin{definition}
Let $\cA$ be a dg category with {\em sums} (\ie the diagonal dg functor $\Delta: \cA \to \cA\times \cA$ admits a left adjoint $\oplus: \cA \times \cA \to \cA$) such that $\mathsf{Z}^0(\cA)$ is equivalent to $\perf(\cA)$; see Notations~\ref{not:basic} and \ref{not:perf}. Under these hypothesis, we say that $\cA$ is {\em flasque} if there exists a dg functor $\tau:\cA \to \cA$ and a natural isomorphism $\mathrm{Id}\oplus\tau \simeq \tau$.
\end{definition}
\begin{proposition}\label{prop:main2}
The dg category $\perf_\dg(\Gamma)$ is flasque.
\end{proposition}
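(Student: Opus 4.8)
The plan is to promote the flasqueness of the \emph{ring} $\Gamma$, already packaged in Lemma~\ref{lem:flasque}, into an endofunctor of $\perf_\dg(\Gamma)$; the only real subtlety is one of handedness. As a preliminary, one checks that $\perf_\dg(\Gamma)$ satisfies the standing hypotheses of the definition of flasqueness: being a pretriangulated, idempotent-complete dg category of right $\Gamma$-modules it has binary direct sums (the dg functor $\oplus$ is simultaneously a product and a coproduct, hence left adjoint to the diagonal), and $\mathsf{Z}^0(\perf_\dg(\Gamma))$ is equivalent to $\perf(\perf_\dg(\Gamma))$. This is routine and I would dispatch it in one line.

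The heart of the matter is to produce a $\Gamma$-$\Gamma$-bimodule $W'$ that is \emph{free of rank one} as a right $\Gamma$-module and satisfies $\Gamma\oplus W' \stackrel{\sim}{\too} W'$ as $\Gamma$-$\Gamma$-bimodules. The bimodule $W$ of Definition~\ref{def:bimodule} already has the second property, but it is twisted on the \emph{right} by $\phi$ and is not compact as a right $\Gamma$-module; hence $-\otimes_\Gamma W$ does \emph{not} preserve $\perf_\dg(\Gamma)$ — it has the wrong handedness. I would fix this by transposing. Let $W'$ be the $\Gamma$-$\Gamma$-bimodule whose underlying right $\Gamma$-module is $\Gamma$ itself and whose left action is $(\gamma,x)\mapsto\phi(\gamma)\cdot x$ (legitimate since $\phi$ is a $k$-algebra map). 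Using the identity $\phi(A)^T=\phi(A^T)$ and the transpose anti-automorphism $(-)^T\colon\Gamma^\op\stackrel{\sim}{\too}\Gamma$ of Definition~\ref{def:cone}, one transposes Lemma~\ref{lem:flasque}: with $\alpha,\beta\in\Gamma$ as in its proof, the maps $(A,B)\mapsto\alpha^T\cdot A+\beta^T\cdot B$ and $B\mapsto(\alpha\cdot B,\beta\cdot B)$ are mutually inverse $\Gamma$-$\Gamma$-bimodule homomorphisms $\Gamma\oplus W'\leftrightarrows W'$. The verification mirrors the one in Lemma~\ref{lem:flasque}: right $\Gamma$-linearity is immediate; left $\Gamma$-linearity uses $A\alpha=\alpha\phi(A)$, $\phi(A)\beta=\beta\phi(A)$ and their transposes; the two composites being identities uses $\alpha\alpha^T=\beta\beta^T=I$, $\alpha^T\alpha+\beta^T\beta=I$ and $\alpha\beta^T=\beta\alpha^T=0$.

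Finally, set $\tau:=-\otimes_\Gamma W'\colon\perf_\dg(\Gamma)\too\perf_\dg(\Gamma)$. Since $W'$ is free of rank one as a right $\Gamma$-module, $-\otimes_\Gamma W'$ carries finitely generated projective right $\Gamma$-modules to finitely generated projective right $\Gamma$-modules, hence bounded complexes of such to bounded complexes of such; it is plainly a dg functor, and (composing with a cofibrant replacement if one insists) it lands in $\perf_\dg(\Gamma)$. From the bimodule isomorphism just constructed,
\[
\mathrm{Id}\oplus\tau \;\simeq\; (-\otimes_\Gamma\Gamma)\oplus(-\otimes_\Gamma W') \;\simeq\; -\otimes_\Gamma(\Gamma\oplus W') \;\simeq\; -\otimes_\Gamma W' \;=\; \tau
\]
naturally in the argument, so $\perf_\dg(\Gamma)$ is flasque. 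The one point demanding care is the one isolated above — a right-twisted bimodule such as $W$ does not induce an endofunctor of the \emph{perfect} dg category, and one must replace it by its transpose $W'$, which is free of rank one on the right; the rest is formal, with Lemma~\ref{lem:flasque} carrying all the real content.
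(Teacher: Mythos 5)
Your proof is correct, and it rests on exactly the same ingredients as the paper's: the bimodule isomorphism of Lemma~\ref{lem:flasque} and the transpose anti-automorphism of $\Gamma$. The presentation, however, is dualized. The paper does not attempt to find a bimodule giving an endofunctor of right $\Gamma$-modules directly; instead it observes that $(-)^T\colon \Gamma^\op\stackrel{\sim}{\to}\Gamma$ reduces the claim to the flasqueness of $\perf_\dg(\Gamma^\op)$, and then uses $W$ as is: since $W$ is free of rank one as a \emph{left} $\Gamma$-module, the dg functor $W\otimes_\Gamma -$ on $\cC_\dg(\Gamma^\op)$ sends $\Gamma$ to $\Gamma$ and hence restricts to $\perf_\dg(\Gamma^\op)$, and Lemma~\ref{lem:flasque} gives the natural isomorphism $\mathrm{Id}\oplus\tau\simeq\tau$ without any further computation. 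You instead stay on the side of right $\Gamma$-modules, which forces you to replace $W$ (whose right action is restricted along $\phi$, hence has no reason to be perfect on the right) by its transpose $W'$, and then to redo the verification of the splitting $\Gamma\oplus W'\simeq W'$. Both routes are valid; the paper's is a bit more economical because it reuses Lemma~\ref{lem:flasque} verbatim, while yours makes explicit the handedness subtlety (why $-\otimes_\Gamma W$ does not obviously preserve $\perf_\dg(\Gamma)$) that the paper silently sidesteps by passing to $\Gamma^\op$. Your computations check out: $\phi(A)^T=\phi(A^T)$, the transposed relations $\alpha^T C=\phi(C)\alpha^T$ and $\beta^T\phi(C)=\phi(C)\beta^T$, and the identities $\alpha\alpha^T=\beta\beta^T=I$, $\alpha^T\alpha+\beta^T\beta=I$, $\alpha\beta^T=\beta\alpha^T=0$ do make your two maps mutually inverse bimodule homomorphisms, and $-\otimes_\Gamma W'$ does preserve finitely generated projectives because $W'$ is free of rank one on the right.
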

\begin{proof}
Notice first that since we have an isomorphism of $k$-algebras
$$(-)^T: \Gamma^\op \stackrel{\sim}{\too} \Gamma$$
it is equivalent to show that the dg category $\perf_\dg(\Gamma^\op)$ is flasque. By definition, $\perf_\dg(\Gamma^\op)$ has sums and $\mathsf{Z}^0(\perf_\dg(\Gamma^\op))$ is equivalent to $\perf(\Gamma^\op)$. Now, recall from Definition~\ref{def:bimodule} the construction of the $\Gamma\text{-}\Gamma$-bimodule $W$. As explained in \cite[\S3.8]{ICM} the bimodule $W$ gives rise to a Quillen adjunction
$$
\xymatrix{
\cC(\Gamma^\op)\ar@<1ex>[d] \\
\cC(\Gamma^\op) \ar@<1ex>[u]^{W\otimes_\Gamma -} \,,
}
$$
which is moreover compatible with the $\cC(k)$-enrichment. Since the $\Gamma\text{-}\Gamma$-bimodule $W$ is $\Gamma$ as a left $\Gamma$-module, the left Quillen dg functor 
$$W \otimes_{\Gamma}-: \cC_\dg(\Gamma^\op) \too \cC_\dg(\Gamma^\op)$$ restricts to a dg functor

$$ \tau: \perf_\dg(\Gamma^\op) \too \perf_\dg(\Gamma^\op)\,.$$
Moreover, given an object $P$ in $\perf_\dg(\Gamma^\op)$ we have a functorial isomorphism
$$
\xymatrix{ P\oplus \tau(P) = P\oplus (W \otimes_{\Gamma}P) \simeq (\Gamma \oplus W) \otimes_{\Gamma}P \ar[r]^-{\psi}_-{\sim} & W \otimes_{\Gamma}P = \tau(P)\,,}$$
where $\psi$ is obtained by tensoring the $\Gamma\text{-}\Gamma$-bimodule isomorphism $\Gamma\oplus W \stackrel{\sim}{\to} W$ of Lemma~\ref{lem:flasque} with $P$. This achieves the proof. 
\end{proof}
\begin{lemma}\label{lem:induced}
Let $\cA$ and $\cB$ be two dg categories, with $\cA$ flasque. Then the dg category $\rep(\cB,\cA)$ (see \S\ref{sub:Morita}) is also flasque.
\end{lemma}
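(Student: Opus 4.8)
The plan is to verify, one by one, the three conditions in the definition of \emph{flasque} for $\rep(\cB,\cA)$; the only real content is the passage from the flasque endofunctor of $\cA$ to one of $\rep(\cB,\cA)$, the two ``size'' conditions being formal. First I would record the structural facts. Recall that $\rep(\cB,\cA)$ is the full dg subcategory of $\cC_\dg(\cB^\op\otimes^{\bbL}\cA)$ on the cofibrant bimodules $X$ such that $X(y,-)$ is compact in $\cD(\cA)$ for every object $y$ of $\cB$. This class of bimodules is stable under shifts, under cones and under direct summands --- all computed fibrewise over $\cB$, and all preserving compact objects --- so $\rep(\cB,\cA)$ is pretriangulated, idempotent complete, and closed under finite direct sums of bimodules. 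In particular the objectwise direct sum of bimodules is a dg left adjoint to the diagonal, so $\rep(\cB,\cA)$ has sums; and, being pretriangulated and idempotent complete, it satisfies $\mathsf{Z}^0(\rep(\cB,\cA))\simeq\perf(\rep(\cB,\cA))$ via the Yoneda dg functor (see \cite{ICM, Toen}). This settles the first two requirements.

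Next I would produce the flasque endofunctor. Since $\cA$ is flasque we are given a dg functor $\tau:\cA\to\cA$ and a natural isomorphism $\eta\colon\mathrm{Id}_\cA\oplus\tau\stackrel{\sim}{\too}\tau$. Extension of scalars along the induced dg endofunctor $\mathrm{id}_{\cB^\op}\otimes^{\bbL}\tau$ of $\cB^\op\otimes^{\bbL}\cA$ is a left Quillen dg endofunctor $\tau_\sharp$ of $\cC_\dg(\cB^\op\otimes^{\bbL}\cA)$; being left Quillen it preserves cofibrant objects, and, as it only affects the $\cA$-variable, it acts fibrewise over $\cB$ as the extension of scalars $\tau_!$ along $\tau$. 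Since $\tau_!\colon\cD(\cA)\to\cD(\cA)$ is a left adjoint whose right adjoint (restriction) preserves coproducts, $\tau_!$ preserves compact objects; hence $\tau_\sharp$ sends $\rep(\cB,\cA)$ into itself and restricts to a dg endofunctor
\[
\tau'\,:=\,\tau_\sharp|_{\rep(\cB,\cA)}\colon\rep(\cB,\cA)\too\rep(\cB,\cA)\,.
\]

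Finally I would verify $\mathrm{Id}\oplus\tau'\simeq\tau'$. The key observation is that extension of scalars along $\mathrm{Id}_\cA\oplus\tau$ splits as the direct sum of extension of scalars along $\mathrm{Id}_\cA$ and along $\tau$: indeed, by the universal property of the sum $\oplus$ in $\cA$, the graph $\cA$-$\cA$-bimodule $(a,a')\mapsto\cA\bigl((\mathrm{Id}_\cA\oplus\tau)(a),a'\bigr)$ is the direct sum of $(a,a')\mapsto\cA(a,a')$ and $(a,a')\mapsto\cA(\tau(a),a')$, and tensoring with a bimodule commutes with this finite direct sum. Applying $\mathrm{id}_{\cB^\op}\otimes^{\bbL}-$ and restricting to $\rep(\cB,\cA)$ yields a natural isomorphism $(\mathrm{id}_{\cB^\op}\otimes^{\bbL}(\mathrm{Id}_\cA\oplus\tau))_\sharp|_{\rep(\cB,\cA)}\simeq\mathrm{Id}\oplus\tau'$. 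On the other hand $\mathrm{id}_{\cB^\op}\otimes^{\bbL}\eta$ is a natural isomorphism of dg endofunctors $\mathrm{id}_{\cB^\op}\otimes^{\bbL}(\mathrm{Id}_\cA\oplus\tau)\stackrel{\sim}{\too}\mathrm{id}_{\cB^\op}\otimes^{\bbL}\tau$, inducing an isomorphism of the associated graph bimodules and hence a natural isomorphism $(\mathrm{id}_{\cB^\op}\otimes^{\bbL}(\mathrm{Id}_\cA\oplus\tau))_\sharp\stackrel{\sim}{\too}\tau_\sharp$. Combining the two gives a natural isomorphism $\mathrm{Id}\oplus\tau'\stackrel{\sim}{\too}\tau'$ on $\rep(\cB,\cA)$, so $\rep(\cB,\cA)$ is flasque.

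I expect the main obstacle to be the bookkeeping with derived tensor products and cofibrancy. Concretely, one must check that $\tau_\sharp$ genuinely restricts to $\rep(\cB,\cA)$ --- preservation of cofibrancy is automatic from left Quillenness, but fibrewise compactness relies on the identification of $\tau_\sharp$ with $\tau_!$ fibrewise together with preservation of compacts by $\tau_!$ --- and that the direct-sum manipulations in the last step take place at the strict (underived) level, which is legitimate here because the relevant graph bimodules are representable, hence cofibrant, in the variable along which one tensors.
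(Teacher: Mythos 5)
Your proof is correct and establishes the lemma, but it does so by a more explicit route than the paper. The paper's argument is a one-liner: $\rep(\cB,-)$ is a $2$-functor preserving (derived) products, so it transports the flasque data $(\tau,\ \mathrm{Id}_\cA\oplus\tau\simeq\tau)$ on $\cA$ directly to the data $(\rep(\cB,\tau),\ \mathrm{Id}\oplus\rep(\cB,\tau)\simeq\rep(\cB,\tau))$ on $\rep(\cB,\cA)$: preservation of products carries $\oplus$ to $\oplus$, and $2$-functoriality carries the natural isomorphism across. You instead unwind what $\rep(\cB,\tau)$ \emph{is}: the left Quillen extension-of-scalars endofunctor $(\mathrm{id}_{\cB^\op}\otimes^{\bbL}\tau)_!$, restricted to $\rep(\cB,\cA)$; you verify by hand that it lands in $\rep(\cB,\cA)$ (cofibrancy via left Quillenness, fibrewise compactness via $\tau_!$ having a coproduct-preserving right adjoint), and you produce the required natural isomorphism by decomposing graph bimodules. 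This is essentially the same mathematics made explicit; what it buys is that the reader need not take the $2$-categorical package on faith, at the cost of the extra bookkeeping with cofibrancy and coends that the paper avoids.

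One small imprecision worth flagging: the kernel bimodule you tensor with to realize $\tau_!$ on a right $\cA$-module should be $(a,a')\mapsto\cA\bigl(a',\tau(a)\bigr)$, since $\tau_! M(a')=\int^a M(a)\otimes\cA(a',\tau(a))$; the bimodule $(a,a')\mapsto\cA(\tau(a),a')$ that you write down is instead the kernel of the \emph{restriction} functor $\tau^*$. As stated, your appeal to ``the universal property of the sum'' — which gives $\cA(a\oplus\tau(a),a')\simeq\cA(a,a')\oplus\cA(\tau(a),a')$ — matches the wrong variance. The fix is immediate: in any $k$-linear dg category the left adjoint $\oplus$ to the diagonal is automatically a biproduct, so one also has $\cA\bigl(a',a\oplus\tau(a)\bigr)\simeq\cA(a',a)\oplus\cA(a',\tau(a))$, and the direct-sum decomposition of the correct graph bimodule follows. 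With that adjustment everything in your argument goes through.
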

\begin{proof}
By construction, the dg category $\rep(\cB,\cA)$ has sums and $\mathsf{Z}^0(\rep(\cA, \cB))$ is equivalent to $\perf(\rep(\cA,\cB))$. Moreover, since $\cA$ is flasque and $\rep(\cB,-)$ is a $2$-functor which preserves (derived) products, we obtain a dg functor $$\rep(\cB,\tau):\rep(\cB,\cA) \too \rep(\cB,\cA)$$ and a natural isomorphism $\mathrm{Id}\oplus \rep(\cB, \tau)\simeq \rep(\cB,\tau)$. 
\end{proof}

Let us now recall the definition of the algebraic $K$-theory of dg categories. Given a dg category $\cA$ we denote by $\perf^{\cW}(\cA)$ the Waldhausen category $\perf(\cA)$, whose weak equivalences and cofibrations are those of the Quillen model structure on $\cC(\cA)$; see \cite[\S3]{DS}. The {\em algebraic $K$-theory spectrum $K(\cA)$ of $\cA$} is the Waldhausen's $K$-theory spectrum~\cite{Wald} of $\perf^{\cW}(\cA)$. Given a dg functor $F: \cA \to \cB$, the extension of scalars left Quillen functor $F_!: \cC(\cA) \to \cC(\cB)$ preserves weak equivalences, cofibrations,
and pushouts. Therefore, it restricts to an exact functor $F_!: \perf^{\cW}(\cA) \to \perf^{\cW}(\cB)$
between Waldhausen categories and so it gives rise to a morphism of spectra $K(F):K(\cA) \to K(\cB)$.

\begin{lemma}\label{lem:K-trivial}
Let $\cA$ be a flasque dg category. Then, its algebraic $K$-theory spectrum $K(\cA)$ is contractible.
\end{lemma}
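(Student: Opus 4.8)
The plan is to invoke the classical Eilenberg swindle, adapted to the Waldhausen $K$-theory of a flasque dg category. Recall that $K(\cA)$ is the Waldhausen $K$-theory spectrum of $\perf^{\cW}(\cA)$, and that by definition of flasqueness we have a dg functor $\tau\colon \cA \to \cA$ together with a natural isomorphism $\mathrm{Id}\oplus \tau \simeq \tau$. The key observation is that $\tau$ respects the Waldhausen structure: since $\tau$ arises (as in Proposition~\ref{prop:main2}) from a left Quillen dg functor, it preserves cofibrations, weak equivalences and pushouts, so it induces an exact endofunctor of $\perf^{\cW}(\cA)$ and hence a self-map $K(\tau)\colon K(\cA) \to K(\cA)$ of spectra.

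First I would record that the biexact functor $\oplus$ together with the additivity theorem for Waldhausen $K$-theory gives, for any two exact endofunctors $F, G$ of $\perf^{\cW}(\cA)$, the relation $K(F\oplus G) \simeq K(F) + K(G)$ as maps of spectra (addition taken in the homotopy groups, or using the infinite-loop-space structure). In particular $\mathrm{Id}$ and $\tau$ are exact endofunctors, so $K(\mathrm{Id}\oplus\tau) \simeq K(\mathrm{Id}) + K(\tau) = \mathrm{id}_{K(\cA)} + K(\tau)$. On the other hand, the natural isomorphism $\mathrm{Id}\oplus\tau \simeq \tau$ consists of weak equivalences, so it induces a homotopy $K(\mathrm{Id}\oplus\tau) \simeq K(\tau)$. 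Combining the two, $\mathrm{id}_{K(\cA)} + K(\tau) \simeq K(\tau)$ in the group of homotopy classes of self-maps of $K(\cA)$; cancelling $K(\tau)$ yields $\mathrm{id}_{K(\cA)} \simeq 0$, hence $K(\cA)$ is contractible.

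The one point needing a little care is the hypothesis of the additivity theorem: one must check that $\perf^{\cW}(\cA)$ is a Waldhausen category in which the additivity theorem applies, and that $\oplus\colon \perf^{\cW}(\cA)\times\perf^{\cW}(\cA)\to\perf^{\cW}(\cA)$ is biexact. Both follow from the fact that $\perf^{\cW}(\cA)$ is the subcategory of cofibrant-and-compact objects in the model category $\cC(\cA)$ with the induced cofibrations and weak equivalences: coproducts in $\cC(\cA)$ of cofibrant objects are cofibrant, coproducts of compact objects along a cofibration of compacts stay compact, and the pushout-product and gluing axioms are inherited from the model structure. I expect this bookkeeping — verifying that everything in sight is genuinely Waldhausen-exact so that Waldhausen's additivity theorem (and the resulting $H$-space cancellation) may be applied — to be the only real obstacle; the swindle itself is formal once that is in place. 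Alternatively, if one prefers to avoid additivity, one can run the swindle on the level of the $S_\bullet$-construction directly, using that $\mathrm{Id}\oplus\tau\simeq\tau$ produces a simplicial homotopy on $wS_\bullet\perf^{\cW}(\cA)$ between $\mathrm{id}$ and a map factoring through a point, but the additivity-theorem argument is cleaner.
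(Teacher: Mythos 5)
Your proof is correct and takes essentially the same approach as the paper: apply Waldhausen's additivity theorem to the exact endofunctors $\mathrm{Id}$ and $\tau$ on $\perf^{\cW}(\cA)$, use $\mathrm{Id}\oplus\tau\simeq\tau$ to obtain $K(\mathrm{Id})+K(\tau)=K(\tau)$, and cancel to get $\mathrm{id}_{K(\cA)}\simeq 0$. One small remark: to see that $\tau$ gives an exact endofunctor the paper simply applies $\mathsf{Z}^0(-)$ to the dg functor $\tau$, which works for an \emph{arbitrary} flasque dg category, whereas your justification appeals to $\tau$ arising from a left Quillen functor as in Proposition~\ref{prop:main2}; that reasoning does not directly cover the case $\cA=\rep(\cB,\perf_\dg(\Gamma))$ with the flasque structure supplied by Lemma~\ref{lem:induced}, which is where the lemma is actually invoked.
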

\begin{proof}
By applying the functor $\mathsf{Z}^0(-)$ to $\cA$ and $\tau$ we obtain an exact functor $$\mathsf{Z}^0(\tau): \perf^{\cW}(\cA) \to \perf^{\cW}(\cA)$$ and a natural isomorphism $\mathrm{Id}\oplus \mathsf{Z}^0(\tau)\simeq \mathsf{Z}^0(\tau)$. Since Waldhausen's $K$-theory satisfies additivity~\cite[Proposition~1.3.2(4)]{Wald} we have the following equality
$$ K(\mathrm{Id}) + K(\mathsf{Z}^0(\tau)) = K(\mathsf{Z}^0(\tau))$$
in the homotopy category of spectra. Therefore, we conclude that $\mathrm{Id}_{K(\cA)}\simeq K(\mathrm{Id})$ is the trivial map. This shows that the algebraic $K$-theory spectrum $K(\cA)$ is contractible.
\end{proof}

\section{Proof of Theorem~\ref{thm:main}}\label{sub:proof}
We start by showing that $\Gamma(\cA)$ (see Notation~\ref{not:principal}) becomes the zero object in the triangulated category $\Mloc(e)$ after application of $\Uloc$. Since $\Gamma(\cA)\simeq \underline{\Gamma}\otimes^{\bbL}\cA$ and $\Uloc$ is symmetric monoidal with respect to a homotopy colimit preserving symmetric monoidal structure on $\Mloc$ (see \cite[Theorem~7.5]{CT1}), it suffices to show that $\underline{\Gamma}$ becomes the zero object in $\Mloc(e)$.

Recall from \cite[\S17]{Duke} that the universal localizing invariant admits the following factorization
\begin{equation}\label{eq:factorization}
\Uloc : \HO(\dgcat) \stackrel{\Uadd}{\too} \Madd \stackrel{\gamma}{\too} \Mloc\,,
\end{equation}
where $\Madd$ is the additive motivator\footnote{The additive motivator has a construction similar to the localizing one. Instead of imposing localization we impose the weaker requirement of additivity.} and $\gamma$ is a localizing morphism between triangulated derivators. Moreover, thanks to \cite[Proposition~3.7]{CT} the objects $\Uadd(\cB)[n]$, with $\cB$ a dg cell and $n \in \bbZ$, form a set of (compact) generators of the triangulated category $\Madd(e)$. Therefore, $\Uadd(\underline{\Gamma})$ is the zero object in $\Madd(e)$ if and only the spectra of morphisms $\bbR\uHom(\Uadd(\cB), \Uadd(\underline{\Gamma}))$, with $\cB$ a dg cell, is (homotopically) trivial; see \cite[\S A.3]{CT}. By \cite[Theorem~15.10]{Duke} we have the following equivalences
$$\bbR\uHom(\Uadd(\cB), \Uadd(\underline{\Gamma}))\simeq K\rep(\cB, \underline{\Gamma}) \simeq K\rep(\cB, \perf_\dg(\underline{\Gamma}))\,.$$
Therefore, Proposition~\ref{prop:main2} and Lemmas~\ref{lem:induced} and~\ref{lem:K-trivial} imply that $\Uadd(\underline{\Gamma})$ is the zero object in $\Madd(e)$. Thanks to the above factorization \eqref{eq:factorization} we conclude that $\Uloc(\underline{\Gamma})$ (and so $\Uloc(\Gamma(\cA))$) is the zero object in $\Mloc(e)$.

Now, recall from Proposition~\ref{prop:main1} that we have an exact sequence
\begin{equation*}\label{eq:ses3}
0 \too \cA \too \Gamma(\cA) \too \Sigma(\cA) \too 0\,.
\end{equation*}
By applying the universal localizing invariant to the preceding exact sequence we obtain a distinguished triangle
$$ \Uloc(\cA) \too \Uloc(\Gamma(\cA)) \too \Uloc(\Sigma(\cA)) \too \Uloc(\cA)[1]$$
in $\Mloc(e)$. Therefore, since $\Uloc(\Gamma(\cA))$ is the zero object, we have a canonical isomorphism
$$ \Uloc(\Sigma(\cA)) \stackrel{\sim}{\too} \Uloc(\cA)[1]\,.$$

\end{document}